\title{Reducts of the Generalized Random Bipartite Graph}
\author{Yun Lu}
\address{Department of Mathematics\\
Kutztown University of PA\\
Kutztown, PA 19530}
\email{lu@kutztown.edu}
\thanks{This paper is a revision of the author's doctoral dissertation, written under the direction of Carol Wood at Wesleyan University. The author
would like to thank Dr.Wood for her guidance and for helpful comments on earlier versions of this paper.}
\keywords{reduct, random, bipartite}
\newtheorem{thm}{Theorem}[section]
\newtheorem{lem}[thm]{Lemma}
\newtheorem{prop}[thm]{Proposition}
\newtheorem{defn}[thm]{Definition}
\begin{document}

\begin{abstract}
Let $\Gamma$ be the generalized random bipartite graph that has two sides $R_l$ and $R_r$ with edges for every pair of vertices between $R_l$ and $R_r$ but no edges within each side, where all the edges are randomly colored by three colors $P_1, P_2, P_3$. In this paper, we investigate the reducts of $\Gamma$ that preserve $R_l$ and $R_r$, and classify the closed permutation subgroups in $Sym(R_l)\times Sym(R_r)$ containing the group $Aut(\Gamma)$.  Our results rely on a combinatorial theorem of Ne\v{s}et\v{r}il-R\"{o}dl and the strong finite submodel property of the random bipartite graph.
\end{abstract}
\maketitle

\section{Introduction}

 As in \cite{sT96}, a reduct of a structure $\Gamma$ is a structure with the same underlying set as $\Gamma$ in some relational language, each of whose relation is $\emptyset$-definable in the original structure. If $\Gamma$ is $\omega$-categorical, then a reduct of $\Gamma$ corresponds to a closed permutation subgroup in $Sym(\Gamma)$ (the full symmetric group  on the underlying set of $\Gamma$) that contains  $Aut(\Gamma)$ (the automorphism group  of $\Gamma$). Two interdefinable reducts are considered to be equivalent. That is, two reducts of a structure $\Gamma$ are equivalent if they have the same $\emptyset$-definable sets, or, equivalently, they have the same automorphism groups. There is a one-to-one correspondence between equivalence classes of reducts $N$ and closed subgroups of $Sym(\Gamma)$ containing $Aut(\Gamma)$ via $N\mapsto Aut(N)$ (see \citend{sT96}).

There are currently a few $\omega$-categorical structures whose reducts have been explicitly classified. In 1977, Higman classified the reducts of the structure $(\mathbb{Q}, <)$ (see \citend{gH77}). In 2008, Markus Junker and Martin Ziegler classified the reducts of expansions of $(\mathbb{Q}, <)$ by constants and unary predicates (see \citend{mJ05}). Simon Thomas showed that there are finitely many reducts of the random graph (\citend{sT91}) in 1991, and of the random hypergraphs (\citend{sT96}) in 1996. In 1995 James Bennett proved similar results for the random tournament, and for the random $k$-edge coloring graphs (\citend{jB95}). In 2011, I investigated the reducts of the random bipartite graph that preserve sides. Equivalently, we analyze the closed subgroups of $Sym(R_l)\times Sym(R_r)$ containing $Aut(\Gamma)$.

In this paper, we consider the generalized random bipartite graphs, i.e. complete random bipartite graphs with $k$ colors $P_1$, $P_2, P_3$ on
$R_l\times R_r$ such that $P_1\cup P_2\cup P_3=R_l\times R_r$ and $P_i\cap P_j=\emptyset$ if $i\neq j$. The appropriate language $L_3$  for such structures can be taken to have two unary relations, $R_l$ and $R_r$, and $3$ binary relations $P_1, P_2, P_3$. For convenience we consider a graph $\Gamma =(V, R_l, R_r, P_1, P_2, P_3)$, where $R_l, R_r\subseteq V$ and $P_1, P_2, P_3\subseteq R_l\times R_r$. Then $\Gamma$ is a bipartite graph having $3$ cross-types if it satisfies the following set $B_3$ of axioms:

\begin{enumerate}
\item $\exists x R_l(x)$
\item $\exists x R_r(x)$
\item $\forall x\forall y (P_i(x, y)\longrightarrow (R_l(x)\wedge R_r(y))), i = 1,\dots,3$
 \item $\forall x\forall y (P_i(x, y)\longrightarrow\neg P_j(x, y)),  i \neq j$

\item $\forall x\forall y(R_l(x)\wedge R_r(y)\longrightarrow(P_1(x, y)\vee P_2(x, y)\vee P_3(x, y)))$;
\item $\forall x ((R_l(x)\vee R_r(x))\wedge\neg (R_l(x)\wedge R_r(x)))$.
\end{enumerate}

\begin{defn}{\label{gbipartite}}
A countable bipartite graph $\Gamma$ having $3$ cross-types $\Gamma$ is \textit{random} if it satisfies the extension properties $\Theta_n$ for all  $n\in \mathbb{N}$:

($\Theta_n$): For any finite pairwise disjoint $X_{l1}$, $X_{l2}, X_{13}\subset R_l$ and finite
pairwise disjoint $X_{r1}$, $X_{r2}, X_{r3}\subset R_r$, each of size at most $n$,
\renewcommand{\labelenumi}{(\alph{enumi})}
\begin{enumerate}
\item there exists a vertex $v\in R_l$ such that $P_i(v, x)$ for every
$x\in X_{ri}, i = 1,\dots,3.$
\item there exists a vertex $w\in R_r$ such that $P_i(x, w)$ for every $x\in X_{li}, i= 1,\dots, 3.$
\end{enumerate}
\end{defn}

The $\Theta_n$'s are first-order sentences, and the axioms in Definition \ref{gbipartite} together with the  $\{\Theta_n\}_{n\in\mathbb{N}}$ form a complete and $\omega$-categorical theory. It can be shown that a 3-colored random bipartite graph exist. It is countable and unique up to isomorphism. It is also easy to show that the 3-colored random bipartite graph is homogeneous by a back-and-forth argument. In the rest of paper, the we use $\Gamma$ to denote the 3-colored random bipartite graph, unless otherwise mentioned.
Notice that with three cross-types, the definition of switch is more complicated because the permutation group $S_3$ is not commutative. From now on, we let $Sym_{\{l, r\}}(\Gamma)$ denote $Sym(R_l)\times Sym(R_r)$.
\begin{defn}
Given $\sigma\in S_3$ and a vertex $v\in R_l$, a \textit{switch} on $v$
according to $\sigma$ is a permutation $\pi\in Sym_{\{l, r\}}(\Gamma)$ such that for any $(a, b)\in R_l\times R_r$ and for $i=1, 2,
3$,
\begin{itemize}
\item if $v= a$, then $P_i(a, b)\longrightarrow
P_{\sigma(i)}(\pi(a), \pi(b))$;
\item otherwise, $P_i(a, b)\longrightarrow
P_{i}(\pi(a), \pi(b))$.
\end{itemize}
\end{defn}
Similarly we define a switch  w.r.t. $v\in R_r$.
\begin{defn}
Given $\sigma\in S_3$ and  $A\subset
\Gamma$, a \textit{switch} on $A$  according to
$\sigma$ is a permutation $\pi\in Sym_{\{l, r\}}(\Gamma)$ such that
for any $(a, b)\in R_l\times R_r$ and for $i=1, 2$ or $3$,
\begin{itemize}
\item if $(a, b)$ has
exactly one entry from  $A$, then $P_i(a, b)\longrightarrow
P_{\sigma(i)}(\pi(a), \pi(b))$;
\item if $(a, b)$ has both entries in $A$, then
$P_i(a, b)\longrightarrow P_{\sigma^2(i)}(\pi(a), \pi(b))$;
\item otherwise,
$P_i(a, b)\longrightarrow P_i(\pi(a), \pi(b))$.
\end{itemize}
\end{defn}

\begin{defn}
If $X\subseteq{\{l, r\}}$ and $H \leq S_3$, then \textit{$S^H_X(\Gamma)$} is the closed subgroup
of $Sym_{\{l, r\}}(\Gamma)$ generated as a topological group by
  Aut($\Gamma$) together with  all $\pi\in Sym_{\{l, r\}}(\Gamma)$ such that there exists a
vertex $v\in R_i$ for $i\in X$ and $\sigma \in H$ such that $\pi$ is a switch w.r.t
$v$ according to $\sigma$.

\end{defn}

Thus the candidates for the reducts are $S_{\{l\}}^H(\Gamma)$,
$S_{\{r\}}^H(\Gamma)$ and $S_{\{l, r\}}^H(\Gamma)$, where $H$ is one of the subgroups of the permutation group $S_3$:

$\{(1)\}$, $\{(1), (12)\}$, $\{(1), (13)\}$, $\{(1),
(23)\}$, $\{(1), (123), (132)\}$ and $S_3$.

We begin the analysis of reducts of $\Gamma$, the random bipartite graph  with three cross-types, by indicating which reducts are essentially new; these will be the ones we call irreducible.

\begin{defn}\label{irre}
Let $G$ be a closed subgroup of $\Gamma$ in $Sym (\Gamma)$.

We say $G$ is \textit{reducible} if
for some $k \in \{1,2,3\}$,  G contains every map $g\in Sym (\Gamma)$

 which  preserves $P_k$.
This means that G is blind to the distinction between  the other two cross-types.
If $G$ is not reducible, then we say $G$ is irreducible.
\end{defn}
So if $G$ is  reducible, then $G$ can be viewed as a reduct of the bipartite graph with two edges,
as already classified in the previous chapter.

Here is the main result of this paper:

\begin{thm}\label{final}
If $G$ is an irreducible closed subgroup such that $Aut(\Gamma)\leq G\leq
Sym_{\{l, r\}}(\Gamma)$, then $G=\langle S_{\{l\}}^{H_1}(\Gamma),
S_{\{r\}}^{H_2}(\Gamma)\rangle$ where $H_1, H_2\leq S_3$. If  $G\subset
Sym_{\{l, r\}}(\Gamma)$, then $H_1=H_2$ unless one of the two groups is trivial.
\end{thm}

Here is how the rest of the paper is organized. ???In section 2, we show that the 3-colored random bipartite graph has the strong finite submodel property; In section 3, we study the relations preserved by the groups $S_X{(\Gamma)}$, where $X\subseteq\{l, r\}$. In section 3,  and in section 4, we discuss a technique term $(m\times n)$-analysis and prove its existence for the random bipartite graph. These prepare us to give an explicit classification of the closed subgroups of $Sym_{\{, r\}}(\Gamma)$ containing $Aut(\Gamma)^*$ in the rest of the paper. In section 5, we prove the first part of Theorem \ref{classification}, which says that the closed subgroups of $S_{\{l, r\}}(\Gamma)$ containing $Aut(\Gamma)^*$ are $Aut(\Gamma)^*, S_{\{l\}}(\Gamma)$, and $S_{\{r\}}(\Gamma)$, and $S_{\{l, r\}}(\Gamma)$. Then in section 6 we show there is no other proper closed subgroup between $S_{\{l, r\}}(\Gamma)$ and $Sym_{\{l, r\}}(\Gamma)$, which completes the proof of Theorem \ref{classification}.

\section{Strong Finite Submodel Property}
In this section, we use the notion of the Strong Finite Submodel Property (SFSP) initially introduced by Thomas in \citend{sT96}, and we prove that the random 3-colored bipartite graph has the SFSP. This property provides a powerful tool when it comes to the proof in the later sessions.
\begin{defn}[\citend{sT96}]
A countable infinite structure $\mathcal{M}$ has the
\textit{Strong Finite Submodel Property (SFSP)} if $\mathcal{M}=\bigcup_{i\in\mathbb{N}} \mathcal{M}_i$ is a union of an increasing chain of substructures $\mathcal{M}_i$ such that
\renewcommand{\labelenumi}{(\arabic{enumi})}
\begin{enumerate}
\item $|\mathcal{M}_i|=i$ for each $i\in\mathbb{N}$; and
\item for any sentence $\phi$ with $\mathcal{M}\models \phi$, there exists $N\in \mathbb{N}$ such that $\mathcal{M}_i\models \phi$ for all $i\geq N$.
\end{enumerate}
\end{defn}

Here we choose a specific chain of bipartite graphs $\Gamma_i$ such that $\Gamma=\cup \Gamma_i$ where $|\Gamma_i|=i$,
$\Gamma_i\subset \Gamma_{i+1}$ for $i\in \mathbb{N}$, and
\begin{itemize}
\item if $i$ is even, then $|\Gamma_{i}\cap R_l|=|\Gamma_{i}\cap R_r|$;
\item otherwise, $|\Gamma_{i}\cap R_l|=|\Gamma_{i}\cap R_r|+1$.
\end{itemize}

Thus for any sentence $\phi$ true in $\Gamma$, there is an $j_{\phi}$ such that $i>j_{\phi}$ implies $\phi$ is true in $\Gamma_i$.

\begin{thm}\label{sfsp}
The countable random 3-colored bipartite graph $\Gamma$ has the SFSP.
\end{thm}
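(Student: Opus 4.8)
The plan is to exhibit $\Gamma$ as the union of an increasing chain $\Gamma_i$ of finite substructures with $|\Gamma_i|=i$ and to show that every first-order sentence true in $\Gamma$ eventually becomes true along the chain. Since $\Gamma$ is $\omega$-categorical, its complete theory is axiomatized by the finitely many axioms in $B_3$ together with the extension axioms $\{\Theta_n\}_{n\in\mathbb{N}}$, and every sentence $\phi$ true in $\Gamma$ is a logical consequence of $B_3\cup\{\Theta_n : n\le n_\phi\}$ for some $n_\phi$. Hence it suffices to show that for each fixed $n$, the graphs $\Gamma_i$ satisfy $B_3$ and $\Theta_n$ for all sufficiently large $i$. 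The axioms in $B_3$ are preserved under passing to any induced subgraph that meets both sides and in which every cross-pair already has a color, so as long as each $\Gamma_i$ is a genuine induced sub-bipartite-graph with both sides nonempty (which the parity prescription guarantees once $i\ge 2$), $B_3$ holds in every $\Gamma_i$. The real content is therefore: for each $n$, all but finitely many $\Gamma_i$ satisfy $\Theta_n$.

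To get the $\Theta_n$'s, I would not take an arbitrary chain but rather build the $\Gamma_i$ by a probabilistic or greedy construction inside $\Gamma$, or equivalently invoke the standard fact that a ``random'' finite $3$-colored complete bipartite graph on $m+m$ (or $m{+}(m{-}1)$) vertices satisfies $\Theta_n$ with probability tending to $1$ as $m\to\infty$. Concretely, fix $n$ and count, in a uniformly random $3$-coloring of the complete bipartite graph on sides of size $m$, the probability that some admissible configuration $X_{r1},X_{r2},X_{r3}$ of total size $\le 3n$ on the right has \emph{no} correct witness on the left: a single fixed left vertex $v$ fails to witness a fixed such configuration with probability $1-3^{-|X_{r1}|-|X_{r2}|-|X_{r3}|}\le 1-3^{-3n}$, independently across the $m$ left vertices, so the probability that the configuration has no witness is at most $(1-3^{-3n})^{m}$. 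A union bound over the $O(m^{3n})$ choices of configuration, and over the symmetric statement with sides reversed, gives failure probability at most $O(m^{3n})(1-3^{-3n})^{m}\to 0$. Therefore for each $m$ large enough a $3$-coloring satisfying $\Theta_n$ exists; one then checks that one can realize such colorings \emph{coherently}, as an increasing chain sitting inside $\Gamma$ — this is where homogeneity and the extension property of $\Gamma$ itself are used, since any finite $3$-colored complete bipartite graph embeds into $\Gamma$ and any embedding extends, so we may inductively pick $\Gamma_{i+1}\supset\Gamma_i$ inside $\Gamma$ with the prescribed side parities and, on a cofinal set of stages, with the $\Theta_n$-property for larger and larger $n$.

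The one subtlety to handle carefully is the bookkeeping that reconciles ``for each $n$, $\Theta_n$ holds eventually'' with the requirement that a \emph{single} chain works for all $n$ simultaneously: I would use a diagonal argument, arranging that by stage $i$ the graph $\Gamma_i$ has been chosen so large and so generic that it satisfies $\Theta_{f(i)}$ for some $f(i)\to\infty$; since any sentence $\phi\in\mathrm{Th}(\Gamma)$ follows from $B_3\cup\{\Theta_n:n\le n_\phi\}$, once $f(i)\ge n_\phi$ we get $\Gamma_i\models\phi$, which is exactly condition (2) of SFSP. Condition (1), $|\Gamma_i|=i$, is automatic from the way the chain is enumerated one vertex at a time with the stated parity alternation.

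I expect the main obstacle to be the probabilistic estimate showing $\Theta_n$ holds with high probability in large random finite $3$-colored bipartite graphs — specifically getting the union bound to close, i.e. ensuring the exponential decay $(1-3^{-3n})^{m}$ beats the polynomial growth $m^{3n}$ in the number of configurations — together with the care needed to extract a coherent increasing chain realizing these high-probability events inside the fixed ambient $\Gamma$ rather than merely a sequence of unrelated finite graphs.
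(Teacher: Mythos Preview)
Your probabilistic estimate is correct and is essentially the same computation the paper carries out. The gap is in the step you yourself flag as the subtlety: turning ``for each $n$, a random size-$m$ structure satisfies $\Theta_n$ with high probability'' into a \emph{single} increasing chain $\Gamma_0\subset\Gamma_1\subset\cdots$ with $\bigcup_i\Gamma_i=\Gamma$ in which each $\Theta_n$ holds on a \emph{cofinite} set of stages. Your diagonal plan requires $\Gamma_i\models\Theta_{f(i)}$ at \emph{every} stage with $f$ nondecreasing, but this cannot be forced one vertex at a time: when you add a right vertex to $\Gamma_i$, you create new triples $X_{r1},X_{r2},X_{r3}$ demanding left witnesses, while supplying no new left vertices, so no choice of the added vertex guarantees $\Gamma_{i+1}\models\Theta_n$. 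Homogeneity and the extension property of $\Gamma$ let you realize any finite type over $\Gamma_i$, but they do not let you dictate the internal extension properties of $\Gamma_{i+1}$ as an abstract structure.

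The paper resolves this not by a greedy or diagonal construction but by Borel--Cantelli. One works in the probability space of all countable $3$-colored bipartite graphs with a fixed enumeration alternating sides, and for fixed $k$ lets $A_n$ be the event that the $n$-th initial segment fails $\Theta_k$. Your bound gives $P(A_n)\le C\,n^{3k}(1-3^{-3k})^{n/2}$, which is not merely $o(1)$ but \emph{summable} in $n$; Borel--Cantelli then yields $P(\overline{\lim}\,A_n)=0$, i.e.\ almost surely only finitely many initial segments fail $\Theta_k$. Intersecting over the countably many $k$ produces a single countable graph $S$ (almost surely isomorphic to $\Gamma$) whose initial-segment chain eventually models every $\Theta_k$, hence every sentence of $\mathrm{Th}(\Gamma)$. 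So the missing idea is to upgrade ``probability $\to 0$'' to ``summable probabilities'' and invoke Borel--Cantelli on one random countable object, rather than attempting to splice together separately-chosen finite models.
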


Theorem \ref{sfsp} is a consequence of the Borel--Cantelli Lemma, as follows below:

\begin{defn}[\citend{sT96}]
If $\{A_n\}_{n\in \mathbb{N}}$ is a sequence of events in
a probability space, then $\bigcap_{n\in
\mathbb{N}}[\bigcup_{n\leq k\in \mathbb{N}}A_k]$ is the event that consists of realization of infinitely many of $A_n$, denoted by \textit{$\overline\lim A_n$}.
\end{defn}

\begin{lem}[Borel--Cantelli, \citend{pB79}]\label{bcl}
Let $\{A_n\}_{n\in \mathbb{N}}$ be a sequence of events
in a probability space. If $\sum_{n=0}^{\infty}{P(A_n)}<\infty$,
then $P(\overline\lim A_n)=0$.
\end{lem}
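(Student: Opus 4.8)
The plan is to derive this directly from countable subadditivity of $P$ together with the fact that the tail of a convergent series vanishes. First I would introduce the tail unions $B_n=\bigcup_{n\leq k}A_k$ and observe that, by the very definition of $\overline\lim A_n$ recalled above, $\overline\lim A_n=\bigcap_{n}B_n$. In particular $\overline\lim A_n\subseteq B_n$ for every fixed $n$, so monotonicity of the measure gives $P(\overline\lim A_n)\leq P(B_n)$.

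The key estimate then comes from applying countable subadditivity to each tail union: for every $n$,
\[
P(\overline\lim A_n)\ \leq\ P(B_n)\ =\ P\!\left(\bigcup_{n\leq k}A_k\right)\ \leq\ \sum_{k=n}^{\infty}P(A_k).
\]
By hypothesis the series $\sum_{n=0}^{\infty}P(A_n)$ converges, so its tail $\sum_{k=n}^{\infty}P(A_k)$ tends to $0$ as $n\to\infty$. The left-hand side $P(\overline\lim A_n)$ is a single number independent of $n$, so letting $n\to\infty$ in the displayed inequality forces $P(\overline\lim A_n)\leq 0$; combined with the nonnegativity of $P$ this yields $P(\overline\lim A_n)=0$, as required.

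The argument is elementary and presents no genuine obstacle; the only point demanding care is the justification of the central inequality and of the passage to the limit. I would note that there are two equivalent routes: one may invoke countable subadditivity directly as above, or instead observe that the $B_n$ form a decreasing sequence with $\bigcap_{n}B_n=\overline\lim A_n$ and apply continuity of the measure from above, $P\!\left(\bigcap_{n}B_n\right)=\lim_{n}P(B_n)$, which is valid here because $P(B_0)\leq 1<\infty$. Either route reduces the statement to the convergence of the series tail, so the entire proof occupies only a few lines.
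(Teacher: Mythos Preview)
Your argument is correct and is exactly the standard proof of the first Borel--Cantelli lemma. Note, however, that the paper does not supply its own proof of this statement: it is quoted as a known result with a reference, so there is nothing to compare against. Your write-up would serve perfectly well as the omitted justification.
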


\begin{proof}[Proof of Theorem \ref{sfsp}]
 Since the extension properties $\Theta_n$'s axiomatize the random 3-colored bipartite graph $\Gamma$ and $\Theta_i$ implies $\Theta_{i-1}$ for all $i\in \mathbb{N}$, for every sentence $\phi$ true in $\Gamma$, there exists some $k\in \mathbb{N}$ such that $\Theta_k$ holds if and only if $\phi$ holds. Let $\Omega$ be the probability space of all countable bipartite graphs $(S, R_l, R_r, P_1, P_2, P_3)$, where $| R_l|=| R_r|=\omega$ and every cross-edge $E\in R_l\times R_r$ has the cross-type $P_1, P_2$ or $P_3$ on it independently with probability $\frac{1}{3}$. For each $n\in \mathbb{N}$ with $n\geq k$,  let $A_n$ be the event that a 3-colored bipartite subgraph $S_n\in [S]^n$ does not satisfy the extension property $\Theta_k$. We consider two cases: n is even (n=2m), and n is odd (n=2m+1). Then by simple computation,
 \begin{equation}
P(A_{2m})\leq 2*{{m}\choose{k}}{{m-k}\choose{k}}{{m-2k}\choose{k}}({(1-(\frac{1}{3})^{3k})})^{m-3k}\nonumber,
\end{equation}
and
 \begin{equation}
P(A_{2m+1})\leq 2*{{m+1}\choose{k}}{{m+1-k}\choose{k}}{{m+1-2k}\choose{k}}({(1-(\frac{1}{3})^{3k})})^{m-3k}\nonumber.
\end{equation}
Notice that $\sum_{n=0}^{\infty}{P(A_n)}=\sum_{m=0}^{\infty}{P(A_{2m})}+\sum_{m=0}^{\infty}{P(A_{2m+1})}$, we have
\begin{equation}
\sum_{n=0}^{\infty}{P(A_n)}\leq 4*\sum_{m=0}^{\infty}{{{m+1}\choose{k}}{{m+1-k}\choose{k}}{{m+1-2k}\choose{k}}({(1-(\frac{1}{3})^{3k})})^{m-3k}}
\end{equation}
where ${{n}\choose{k}}$ is the number of combinations of n objects taken $k$ at a time. Let $C_m={{{m+1}\choose{k}}{{m+1-k}\choose{k}}{{m+1-2k}\choose{k}}({(1-(\frac{1}{3})^{3k})})^{m-3k}}$. Then $\lim_{m\to +\infty}\frac{C_{m+1}}{C_m}=(1-(\frac{1}{3})^{3k})<1$. By the ratio test for infinite series, we have $\sum_{m=0}^{\infty}{C(m)}$ converges, and so does $\sum_{n=0}^{\infty}{P(A_n)}$. Thus by Lemma \ref{bcl}, $P(\overline\lim A_n)=0$. So there exists a 3-colored bipartite graph $S\in\Omega$ and an integer $N$ such that for all $n\geq N$, $S_n\in [S]^n$ satisfies the extension property $\Theta_k$, and so $\phi$. Notice that the choice of $S$ ensures that $S$ is countable and satisfies all the axioms for the random bipartite graph. Hence $S$ is isomorphic to $\Gamma$. Then $\Gamma$ has the SFSP, which completes the proof of Theorem \ref{sfsp}.
\end{proof}

Similarly, we can show as in Theroem \ref{sfsp}  that
\begin{prop}
The countable random $k$-colored bipartite graph $\Gamma$ has the strong finite submodel property (SFSP).
\end{prop}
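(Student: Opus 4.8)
The plan is to mirror the proof of Theorem \ref{sfsp} almost verbatim, replacing the three colors $P_1,P_2,P_3$ by $k$ colors $P_1,\dots,P_k$ and simply tracking how the number of colors enters the probability estimate. To avoid a clash with the role of $k$ as the number of colors, I would denote the relevant extension property by $\Theta_N$. First I would record the structural facts that carry over unchanged: the $k$-colored analogues of the axioms together with the extension properties $\Theta_n$ axiomatize a complete, $\omega$-categorical theory whose unique countable model is $\Gamma$; the $\Theta_i$ are increasingly strong (each $\Theta_i$ implies $\Theta_{i-1}$); and for any sentence $\phi$ with $\Gamma\models\phi$ there is an $N$ such that $\Theta_N$ forces $\phi$ on finite structures. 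Hence it suffices to show that, in a suitable probability space, almost every finite $k$-colored bipartite graph satisfies $\Theta_N$ once it is large enough, with both sides growing as in the balanced chain $\Gamma_i$ chosen before Theorem \ref{sfsp}.

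Next I would take the probability space $\Omega$ of countable $k$-colored bipartite graphs on fixed sides $R_l,R_r$ of size $\omega$, where each cross-edge receives one of $P_1,\dots,P_k$ independently and uniformly with probability $\frac{1}{k}$. Fixing $N$ as above, let $A_n$ be the event that the $n$-vertex subgraph $S_n\in[S]^n$ fails $\Theta_N$. For the even case $n=2m$, the property $\Theta_N$ on the side $R_l$ requires, for each choice of $k$ pairwise disjoint target sets in $R_r$ of size $N$, a witnessing vertex in $R_l$; a fixed left vertex witnesses a fixed configuration with probability at least $\left(\frac{1}{k}\right)^{kN}$, and the surviving candidate vertices fail independently, so a union bound over configurations gives
\begin{equation}
P(A_{2m})\leq 2{m \choose N}{m-N \choose N}\cdots{m-(k-1)N \choose N}\left(1-\left(\frac{1}{k}\right)^{kN}\right)^{m-kN},\nonumber
\end{equation}
with the analogous bound for $n=2m+1$, the factor $2$ accounting for the two directions (a) and (b) of the extension property.

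Finally I would apply the ratio test to the summand $C_m$ on the right. The product of the $k$ binomial coefficients is a polynomial in $m$ of fixed degree $kN$, so $\lim_{m\to\infty}\frac{C_{m+1}}{C_m}=1-\left(\frac{1}{k}\right)^{kN}$, which is strictly less than $1$. Thus $\sum_m C_m$ converges, whence $\sum_n P(A_n)<\infty$, and the Borel--Cantelli Lemma \ref{bcl} gives $P(\overline\lim A_n)=0$. Exactly as in the three-colored case this yields a graph $S\in\Omega$ that is isomorphic to $\Gamma$ and an $N$ beyond which every finite initial segment of the chain satisfies $\phi$, which is the SFSP. The only point needing care is the finiteness of the color palette: the argument uses crucially that $\left(\frac{1}{k}\right)^{kN}>0$, so that the geometric factor $\left(1-\left(\frac{1}{k}\right)^{kN}\right)^{m-kN}$ decays at a fixed rate and dominates the polynomial growth of the binomial product; for any fixed finite $k$ this is immediate, and no new obstacle arises beyond bookkeeping the exponent $kN$ in place of $3N$.
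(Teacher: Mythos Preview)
Your proposal is correct and is exactly the approach the paper indicates: the paper gives no separate argument for general $k$ but simply says the proof of Theorem~\ref{sfsp} carries over, and you have carried out precisely that generalization, replacing $3$ by $k$ in the probability estimate and the ratio-test computation. The bookkeeping (renaming $\Theta_k$ to $\Theta_N$, the product of $k$ binomial coefficients, and the geometric factor $\bigl(1-(1/k)^{kN}\bigr)^{m-kN}$) is handled correctly.
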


For the remainder of this Chapter we will restrict our attention to the case $k = 3$.
We expect that our results to generalize to arbitrary $k$, but we have not
organized the details for the more general results at this stage.

\section{Candidates for Irreducible Closed Groups}
In this section, we will discuss the candidates for irreducible closed groups.

Motivated by the colorings defined in Bennet's thesis \cite{jB95}, we may define a class of edge colorings $\chi: [A]^2\longrightarrow \{l, r, P_1, P_2, P_3\}$ for a bipartite graph $A\subseteq\Gamma$ as follows,  where $\{a, b\}\in [A]^2$:
\begin{itemize}
\item if $\{a, b\}\in [R_l]^2$, then $\chi(a, b)=l$;
\item if $\{a, b\}\in [R_r]^2$, then $\chi(a, b)=r$;
\item if $(a, b)\in R_l\times R_r$ and $P_i(a, b)$ for some $i=1, 2, 3$, then $\chi(a, b)=P_i$.
\end{itemize}

\begin{defn}
 Let $A_1$ be a bipartite graph with the edge coloring $\chi_1$, and $A_2$ be a bipartite graph with the edge coloring $\chi_2$ where $\chi$'s are defined as above. If $|A_1|=|A_2|$ and $|A_1\cap R_l|=| A_2\cap R_l|$, then the edge coloring $\chi_2$ is a \textit{permutation}
 of the edge coloring $\chi_1$ if there is some vertex bijection $\phi:
 A_1\longrightarrow A_2$ preserving $R_l, R_r$ and some permutation $\sigma\in S_3$
 such that for every cross-edge $(a, b)\in (A_1\times A_1) \cap (R_l\times R_r)$, $\chi_1(a, b)=\sigma(\chi_2(\phi(a),  \phi(b))$. That is, $P_i(a, b)$ implies $P_{\sigma(i)}(\phi(a), \phi(b))$ for $i=1, 2, 3$.

\end{defn}

\begin{defn}

Let $A$ be  a bipartite graph, and $\chi_1, \chi_2$  be  edge colorings on $[A]^2$   as above. Then the edge coloring $\chi_2$ is \textit{homogeneous} w.r.t. the coloring $\chi_1$ if for any $(a, b), (a', b')\in  (A\times A) \cap (R_l\times R_r)$, $\chi_2(a, b)=\chi_2(a', b')$ implies $\chi_1(a, b)=\chi_1(a', b')$.
\end{defn}

\begin{claim}\label{homogeneous}
If $A$ is a bipartite graph, $\chi_1$ and $\chi_2$ are edge colorings on $[A]^2$ defined as above. If $\chi_2$ is homogenous w.r.t. $\chi_1$ but is not a permutation of $\chi_1$, then there must be two distinct colors $P_i$ and $P_j$ and some color $P_k$ ($i, j, k\in \{1, 2, 3\}$) such that for any $(x, y)\subseteq (A\cap R_l)\times (A\cap R_r)$, $\chi_2(x, y)=P_i$ or $\chi_2(x, y)=P_j$ implies $\chi_1(x, y)=P_k$.
\end{claim}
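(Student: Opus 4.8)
The plan is to prove the statement directly, by contraposing the failure of ``$\chi_2$ is a permutation of $\chi_1$''. The guiding observation is that homogeneity of $\chi_2$ with respect to $\chi_1$ says precisely that, on cross-edges, the colour classes of $\chi_2$ refine those of $\chi_1$.

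First I would let $S \subseteq \{P_1, P_2, P_3\}$ be the set of colours that $\chi_2$ assigns to cross-edges of $A$, and use homogeneity to define a map $f \colon S \to \{P_1, P_2, P_3\}$ by $f(P) = \chi_1(x,y)$ for any cross-edge $(x,y) \in (A \cap R_l) \times (A \cap R_r)$ with $\chi_2(x,y) = P$. Homogeneity guarantees this is independent of the chosen edge, and the value is some $P_m$ because $\chi_1$ colours every cross-edge by a $P$-colour; thus $\chi_1(x,y) = f(\chi_2(x,y))$ holds for every cross-edge.

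Next I would show that $f$ cannot be injective. If it were, then since $|S| \le 3$ and $f$ is an injection into the three-element set $\{P_1, P_2, P_3\}$, it would extend to a bijection $\sigma$ of $\{P_1, P_2, P_3\}$ onto itself, i.e.\ to an element $\sigma \in S_3$ with $\sigma(P) = f(P)$ for all $P \in S$. Taking $\phi$ to be the identity map on $A$, we would then have $\chi_1(x,y) = \sigma(\chi_2(\phi(x),\phi(y)))$ for every cross-edge, together with $|A| = |A|$ and $|A \cap R_l| = |A \cap R_l|$, so $\chi_2$ would be a permutation of $\chi_1$, contrary to hypothesis. Hence $f$ is not injective, so there exist distinct $P_i, P_j \in S$ and some $P_k$ with $f(P_i) = f(P_j) = P_k$; then any cross-edge $(x,y)$ with $\chi_2(x,y) \in \{P_i, P_j\}$ satisfies $\chi_1(x,y) = f(\chi_2(x,y)) = P_k$, which is exactly the asserted conclusion.

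The argument is short, and the only points needing care are: the degenerate case in which $A$ has no cross-edge at all, where $S = \emptyset$, $f$ is vacuously injective, and $\chi_2$ is trivially a permutation of $\chi_1$ (so this case cannot occur under the hypothesis and is already covered by the step above); and the bookkeeping of extending an injective partial colour map to a genuine element of $S_3$, which must be done over all of $\{P_1, P_2, P_3\}$ and not merely over the image of $f$. I do not anticipate a real obstacle: the content lies entirely in recognizing homogeneity as the refinement condition, after which the dichotomy ``$f$ injective $\Rightarrow$ $\chi_2$ is a permutation of $\chi_1$'' versus ``$f$ not injective $\Rightarrow$ a collision of the required form'' closes the proof.
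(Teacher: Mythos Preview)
Your argument is correct and is precisely the unpacking of the paper's one-line proof, which simply says ``by the definition of homogeneous and permutation colorings, if $\chi$ is not a permutation then it must collapse two colors.'' You have made explicit the function $f$ and the injectivity dichotomy that the paper leaves implicit, but the underlying idea is identical.
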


\begin{proof}
By the definition of homogeneous and permutation colorings, if $\chi$ is not a permutation then
it must collapse two colors.
\end{proof}

\begin{defn}
Let $G$ be an irreducible closed subgroup of $Sym (\Gamma)$. The pair $(R, \alpha)$, where $R$ is a finite subset of $\Gamma$ and  $\alpha\in \Gamma\backslash R$, is
\textit{sufficiently complex} w.r.t. $G$ if the following hold:
\renewcommand{\labelenumi}{(\arabic{enumi})}
\begin{enumerate}
\item for any $g\in G$ and $c\in\{P_1, P_2, P_3\}$, there is a
cross-edge $(a, b)\in (g(R)\cap R_l)\times (g(R)\cap R_r)$ such that
$c(a, b)$. ($R$ witnesses all the cross-types.)
\item if there is some $g\in G$ such that $g\upharpoonright {R\cup\{\alpha\}}$
is a switch w.r.t. $\alpha\in R_l$ permuting $P_i$ according to
$\sigma$, then some (hence all) switches $f$ w.r.t. singletons in $R_l$ according to $\sigma$ are also in $G$. ($R\cup\{\alpha\}$ witnesses
which switches w.r.t. $R_l$ are not in $G$.)
\item if there is some $g\in G$ such that $g\upharpoonright {R\cup\{\alpha\}}$
is a switch w.r.t. $\alpha\in R_r$ permuting $P_i$ according to
$\sigma$, then some (hence all) switches $f$ w.r.t. singletons in $R_r$ according to $\sigma$ are also in $G$. ($R\cup\{\alpha\}$ witness
which switches w.r.t. $R_r$ are not in $G$.)
\end{enumerate}
\end{defn}
Note: This ``sufficiently complex" concept is different from that in Chapter $3$.

\begin{defn}
A finite $R\subset\Gamma$ is \textit{sufficiently complex} w.r.t. $G$ if it satisfies Property $(1)$, and there is some $\alpha\in R_l$ such that $(R, \alpha)$ satisfies Property $(2)$, and there is some $\beta\in R_r$ such that $(R, \beta)$ satisfies Property $(3)$.
\end{defn}

\begin{claim}\label{S}
If $R$ is sufficiently complex and $S\supseteq R$, then S is sufficiently complex.
\end{claim}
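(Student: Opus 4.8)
\textbf{Proof plan for Claim \ref{S}.}
The plan is to verify each of the three defining properties of ``sufficiently complex'' for $S$ by pulling the witnesses back to $R$, using the fact that $R\subseteq S$. For Property (1), let $g\in G$ and $c\in\{P_1,P_2,P_3\}$. Since $R$ is sufficiently complex, there is a cross-edge $(a,b)\in (g(R)\cap R_l)\times(g(R)\cap R_r)$ with $c(a,b)$; because $g(R)\subseteq g(S)$, the same edge lies in $(g(S)\cap R_l)\times(g(S)\cap R_r)$, so $S$ witnesses all cross-types as well.

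For Property (2), the key point is that $R$ being sufficiently complex guarantees a vertex $\alpha\in R_l$ such that $(R,\alpha)$ satisfies Property (2); I would take the same $\alpha$ (note $\alpha\notin R$, but one may need to observe $\alpha\notin S$ or otherwise choose $\alpha\in S\setminus\text{(used part)}$ — this is the one place requiring a small argument, see below) and check $(S,\alpha)$. Suppose some $g\in G$ restricts to a switch on $S\cup\{\alpha\}$ w.r.t. $\alpha$ according to $\sigma$. Then $g$ restricted to the smaller set $R\cup\{\alpha\}\subseteq S\cup\{\alpha\}$ is also a switch w.r.t. $\alpha$ according to the same $\sigma$, since the switch conditions are pointwise on cross-edges and restricting to a subset only drops conditions. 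Hence by Property (2) for $(R,\alpha)$, the singleton switches in $R_l$ according to $\sigma$ lie in $G$, which is exactly what Property (2) for $(S,\alpha)$ demands. Property (3) is handled symmetrically with the vertex $\beta\in R_r$.

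The main obstacle — really the only subtlety — is the requirement in the definition that the distinguished vertex lie outside the finite set: $(R,\alpha)$ needs $\alpha\in\Gamma\setminus R$, and for $(S,\alpha)$ we need $\alpha\in\Gamma\setminus S$. If the $\alpha$ furnished by $R$'s complexity happens to lie in $S\setminus R$, it is no longer available. The fix is that Property (2) for $(R,\alpha)$ is really a statement about the \emph{finite colored configuration} on $R\cup\{\alpha\}$: by homogeneity of $\Gamma$ and the extension properties $\Theta_n$, for any finite $S\supseteq R$ one can find $\alpha'\in R_l\setminus S$ realizing the same cross-type pattern over $R$ as $\alpha$ did (indeed over any finite superset, extending the pattern arbitrarily), and then the hypothesis ``some $g\in G$ makes $g\!\restriction\!S\cup\{\alpha'\}$ a switch w.r.t. $\alpha'$ according to $\sigma$'' still forces, via restriction to $R\cup\{\alpha'\}$ and an automorphism of $\Gamma$ carrying $\alpha'$ to $\alpha$ over $R$, the conclusion that the singleton switches according to $\sigma$ are in $G$. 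Thus $(S,\alpha')$ satisfies Property (2). With this replacement understood, and $\beta'$ chosen analogously in $R_r\setminus S$ for Property (3), all three properties hold for $S$, so $S$ is sufficiently complex.
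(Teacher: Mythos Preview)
Your proof is correct and follows the same approach as the paper: Property~(1) via $g(R)\subseteq g(S)$, and Properties~(2)--(3) by observing that a switch on $S\cup\{\alpha\}$ restricts to a switch on $R\cup\{\alpha\}$ with the same $\sigma$, then invoking the sufficient complexity of $(R,\alpha)$. You actually go slightly further than the paper by explicitly treating the case $\alpha\in S\setminus R$ (replacing $\alpha$ by an $\alpha'\in R_l\setminus S$ of the same type over $R$ via homogeneity), a detail the paper's proof simply passes over.
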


\begin{proof}

First, for any $g\in G$, and any $c\in \{P_i\}$ for $i=1, 2, 3$, since $R$ is
sufficiently complex, there always exists one cross-edge
$(a, b)\in g(R)\subseteq g(S)$ such that $c(a, b)$. So $S$ has the property
$(1)$. Second, suppose $\alpha\in \Gamma\backslash R$ and $(R, \alpha)$ is sufficiently complex. If there exists some $g\in G$ such that
$g\upharpoonright {S\cup\{\alpha\}}$ is a switch w.r.t. $\alpha\in R_l$
according to $\sigma$, then $g\upharpoonright {R\cup\{\alpha\}}$ is a switch
w.r.t. $\alpha$ according to $\sigma$. Since $(R,\alpha)$ is sufficiently
complex, some (hence all) switches $f$ w.r.t. singletons according to $\sigma$ are in $G$, showing that condition $(2)$ holds.  Similarly we can prove $(S, \alpha)$ has
the property $(3)$.
\end{proof}

\begin{thm}\label{existstence}
If $G$ is an irreducible closed subgroup of $Sym_{\{l, r\}}(\Gamma)$ containing $Aut(\Gamma)$, then there is a pair $(R, \alpha)$ which
is sufficiently complex w.r.t. $G$.
\end{thm}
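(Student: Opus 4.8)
The plan is to secure the three defining conditions of ``sufficiently complex'' one at a time and then take a union, invoking Claim~\ref{S} to see that the union is again sufficiently complex. Let $H_1$ be the set of $\sigma\in S_3$ such that $G$ contains a switch w.r.t.\ some singleton of $R_l$ according to $\sigma$, and let $H_2$ be defined analogously over $R_r$; using the homogeneity of $\Gamma$, and normalizing switches so as to fix the vertex acted on, one checks routinely that $H_1,H_2\leq S_3$ and that ``some singleton'' may be upgraded to ``every singleton''. Everything reduces to a compactness principle for the closed group $G$, in two forms: \emph{(P1)} if for every finite $R\subset\Gamma$ there is $g\in G$ with $g(R)$ omitting one of $P_1,P_2,P_3$, then $G$ contains an element mapping the whole vertex set into a proper subset of itself, which is absurd; and \emph{(P2)} for $\sigma\in S_3\setminus H_1$ and a fixed $\alpha\in R_l$, if for every finite $R$ with $\alpha\notin R$ there is $g\in G$ restricting on $R\cup\{\alpha\}$ to a switch w.r.t.\ $\alpha$ according to $\sigma$, then $G$ contains an actual such switch, contradicting $\sigma\notin H_1$ (and symmetrically over $R_r$ with $H_2$). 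Establishing \emph{(P1)} and \emph{(P2)} is the delicate point: $G$ is closed but not compact, so the relevant descending chain of nonempty clopen ``approximation'' sets need not have nonempty intersection, and one must actively normalize the approximants by automorphisms of $\Gamma$ — exploiting homogeneity so that only finitely many normal forms occur at each finite level — before a pointwise limit can be taken inside $G$.

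For \emph{(P1)}: pass to a subsequence of the $g_n$ witnessing failure on $\Gamma_n$ along which the omitted colour is a fixed one, say $P_3$. As $\Gamma_m\subseteq\Gamma_n$ for $m\leq n$, each $g_n\upharpoonright\Gamma_m$ is a side-preserving injection with $P_3$-free image, and two such injections $f,f'\colon\Gamma_m\to\Gamma$ are $Aut(\Gamma)$-conjugate exactly when they pull back the same $\{P_1,P_2\}$-colouring onto the cross-pairs of $\Gamma_m$ (then $f'\circ f^{-1}$ is a finite partial isomorphism of $\Gamma$, which extends to an automorphism). There are only finitely many such colourings of $\Gamma_m$, so by a pigeonhole/König's lemma argument there is a coherent, cofinal choice of conjugacy classes; lifting it yields $\psi_n\in Aut(\Gamma)\leq G$ for which the maps $\psi_ng_n\upharpoonright\Gamma_m$ cohere in $m$ and keep $P_3$-free images. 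The $\psi_ng_n$ then converge pointwise to some $h\in G$ with $h(\Gamma_m)$ $P_3$-free for all $m$, i.e.\ $h$ sends every cross-edge to a non-$P_3$-edge — impossible for a bijection of the vertex set, since $\Gamma$ has $P_3$-edges. Hence some finite $R_1$ has property $(1)$.

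For \emph{(P2)}: let $M_n$ be $\Gamma_n\cup\{\alpha\}$ with the colour of every edge at $\alpha$ shifted by $\sigma$, so that $g_n\upharpoonright(\Gamma_n\cup\{\alpha\})$ becomes a colour-preserving embedding of the finite structure $M_n$ into $\Gamma$; the $M_n$ form an increasing chain, and any two embeddings of a fixed finite $M_n$ into the homogeneous $\Gamma$ differ by an automorphism. Picking $\psi_n\in Aut(\Gamma)\leq G$ recursively (with $\psi_1$ the identity) so that $(\psi_ng_n)\upharpoonright M_{n-1}=(\psi_{n-1}g_{n-1})\upharpoonright M_{n-1}$, the maps $\psi_ng_n$ converge pointwise to some $h\in G$ whose restriction to every $M_n$ is colour-preserving. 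Unwinding the definition of $M_n$, this says exactly that $h$ preserves the colour of every cross-edge not at $\alpha$ and shifts the colour of every cross-edge at $\alpha$ by $\sigma$; that is, $h$ is a switch w.r.t.\ $\alpha$ according to $\sigma$ and lies in $G$, so $\sigma\in H_1$, the desired contradiction. Thus for each $\sigma\in S_3\setminus H_1$ there is a finite pair $(R_\sigma,\alpha_\sigma)$ with $\alpha_\sigma\in R_l$ such that no $g\in G$ restricts on $R_\sigma\cup\{\alpha_\sigma\}$ to a switch w.r.t.\ $\alpha_\sigma$ according to $\sigma$, and symmetrically pairs $(R'_\tau,\beta_\tau)$ with $\beta_\tau\in R_r$ for $\tau\in S_3\setminus H_2$.

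Finally we assemble. Property $(1)$ and all of the above witnessing properties are preserved by applying an automorphism of $\Gamma$ (just conjugate the offending $g$), so after moving the finitely many pieces we may assume every $\alpha_\sigma$ equals a single $\alpha_0\in R_l$, every $\beta_\tau$ equals a single $\beta_0\in R_r$, and $\{\alpha_0,\beta_0\}$ is disjoint from each of $R_1,R_\sigma,R'_\tau$ (routine, using that the relevant point stabilizers in $Aut(\Gamma)$ have infinite orbits on the opposite side). Let $R$ be the union of all these sets. Then $R$ has property $(1)$; $(R,\alpha_0)$ has property $(2)$, since its conclusion is automatic for $\sigma\in H_1$ while for $\sigma\notin H_1$ any $g$ realizing its hypothesis would, restricted to $R_\sigma\cup\{\alpha_0\}$, contradict the choice of $(R_\sigma,\alpha_0)$; and likewise $(R,\beta_0)$ has property $(3)$. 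Hence $(R,\alpha_0)$ is a sufficiently complex pair — indeed $R$ is a sufficiently complex finite set — and Claim~\ref{S} shows the same for any finite superset. The only genuinely technical obstacle, as indicated, is making the inverse-limit step in \emph{(P1)} and \emph{(P2)} rigorous over the non-compact group $G$.
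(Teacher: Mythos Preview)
Both of your limit constructions share a defect you flag as merely ``technical'' but which is more serious: a pointwise limit of permutations is in general only an injection, and closedness of $G$ inside $Sym_{\{l,r\}}(\Gamma)$ captures only \emph{bijective} limits. For \emph{(P2)} this is easily repaired, and the paper's argument is essentially the repaired version: a bijective global switch $f\in Sym_{\{l,r\}}(\Gamma)$ w.r.t.\ $\alpha$ according to $\sigma$ exists by back-and-forth; for every finite $B$ your $g_n\upharpoonright B$ agrees with $f\upharpoonright B$ after post-composing an automorphism (the two images are isomorphic finite substructures of $\Gamma$), so closedness yields $f\in G$ directly without ever building an abstract limit.

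For \emph{(P1)}, however, the defect is fatal rather than technical. Your own concluding sentence shows the limit $h$ \emph{cannot} be surjective (a surjection would hit $P_3$-edges), so $h\notin Sym_{\{l,r\}}(\Gamma)$, hence $h\notin G$, and no contradiction arises: there is simply no bijective target to aim for. A second red flag is that your \emph{(P1)} argument never invokes irreducibility of $G$, yet Property~(1) genuinely fails for reducible $G$ --- for $G=Sym_{\{l,r\}}(\Gamma)$ every finite $R$ can be sent to a monochromatic set --- so irreducibility must be consumed somewhere. The paper takes a different route at this point: from ``every finite $B$ can be sent by some $g\in G$ to a set missing the fixed colour $c$'' it does not pass to a limit but applies the Ne\v{s}et\v{r}il--R\"odl theorem to locate, inside such a $B$, a copy $\Gamma_i'\cong\Gamma_i$ on which the original edge colouring $\chi$ is homogeneous with respect to $\chi\circ g$. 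Since $\chi\circ g$ omits $c$ while $\chi$ realises all three colours on $\Gamma_i'$, the colouring $\chi\circ g$ is not a permutation of $\chi$, and the homogeneity claim forces $g$ to collapse two of the original colours to a single one on $\Gamma_i'$. From this one argues that every side-preserving bijection preserving the remaining colour $P_k$ lies in $G$, i.e.\ $G$ is reducible, contradicting the hypothesis. Irreducibility is used precisely here and cannot be bypassed by a compactness argument.
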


\begin{proof}
We prove that there exist $R_0$ with the property $(1)$, $(R_1, \alpha)$ where $\alpha\in R_l$ with the property $(2)$, and $(R_2, \alpha)$ where $\alpha\in R_r$ with the property $(3)$.

\textbf{Property (1)}. Suppose there is no such $R_0$ in $\Gamma$: i.e. for any
$R\subset \Gamma$, there exists some $g\in G$ such that the
cross-edges of $g(R)$ have fewer than three cross-types. Then
if $\Gamma=\cup \Gamma_i$ is our nice enumeration as in the strong finite submodel property, there exists
a sequence $\{f_i\}_{i\in \mathbb{N}}\subset G$ such that $f_i(\Gamma_i)$ has fewer than three
cross-types. Since there are only three cross-types, but
infinitely many $\{f_i\}$, then there exists some $c\in\{P_i\}$ and
$\{f_{i_j}\}\subseteq \{f_{i}\}$ such that $f_{i_j}(\Gamma_{i_j})$ has no cross-type
$c$. Hence for every finite $B\subset\Gamma$, there is some $g\in G$
such that $g(B)$ has no cross-edge with cross-type $c$ on it. For each $i\in \mathbb{N}$, we define an edge coloring $\chi: [\Gamma_i]^2\longrightarrow \{l, r, P_1, P_2, P_3\}$ for every $\{a, b\}\in [\Gamma_i]^2$ by
\begin{itemize}
\item if $\{a, b\}\subseteq R_l$, then $\chi(a, b)=l$;
\item if $\{a, b\}\subseteq R_r$, then $\chi(a, b)=r$;
\item if $(a, b)\in R_l\times R_r$ and $P_1(a, b)$, then $\chi(a, b)=P_1$;
\item if $(a, b)\in R_l\times R_r$ and $P_2(a, b)$, then $\chi(a, b)=P_2$;
\item if $(a, b)\in R_l\times R_r$ and $P_3(a, b)$, then $\chi(a, b)=P_3$.
\end{itemize}
 and a vertex coloring $\phi: \Gamma_i\longrightarrow\{L, R\}$ for every $a\in \Gamma_i$ by
 \begin{itemize}
 \item if $a\in R_l$, then $\phi(a)=L$;
 \item if $a\in R_l$, then $\phi(a)=R$.
 \end{itemize}

 Let $(\Gamma_i, \chi, \phi)$ be the $\alpha$-pattern $P$. By the Ne\v{s}et\v{r}il-R\"{o}dl Theorem, there is some bipartite graph $B_i\subset \Gamma$ such that for every partition $F$ on $B_i$, there is $\Gamma_i'\subseteq B_i$ such that
 \begin{enumerate}
 \item  $\Gamma_i'$ has the $\alpha$-pattern $P$ (hence $\Gamma_i'\cong\Gamma_i$);
 \item  $\Gamma_i'$ is $F$-homogeneous.
 \end{enumerate}

Now we choose $N\in \mathbb{N}$ such that when $j\geq N$, $\Gamma_j$ has all colors. Now let $g_i\in G$ be such that $g_i(B_i)$ has no cross-edge with cross-type $c$ and let $F=\chi\circ g_i$. Then $F=\chi\circ g_i: [B_i]^2\longrightarrow \{l, r, P_1, P_2, P_3\}\backslash\{c\}$. Since $g(\Gamma_i')$ has no cross-edge with cross-type $c$, the coloring  $\chi$ is not a permutation of $\chi\circ g_i$: so by Claim \ref{homogeneous} there must be distinct cross-types $P_m$ and $P_n$ and some cross-type $P_l$  such that for every $(a, b)\in (R_l\times R_r)\cap \Gamma_i'$, $\chi(x, y)=P_m$ or $\chi(x, y)=P_n$ implies $\chi\circ g_i(x, y)=P_l$, i.e.  $P_m(x, y)$ or $P_n(x, y)$ implies $P_l(g_i(x), g_i(y))$. WLOG, let $\Gamma_i$ replace $\Gamma_i'$.

Let $X, Y\subset\Gamma$ be finite bipartite subgraphs with $|X\cap R_i|=|Y\cap R_i|$ ($i=l, r$) and $f: X\longrightarrow Y$ such that $f$ preserves $P_k, R_l$ and $R_r$ where $k\neq m, n$. For $X$, there are some $N_x\in \mathbb{N}$ such that $\Gamma_{N_x}\supset X$. By a similar argument as above, there is some $g_X\in G$ such that for every $(a_x, b_x)\in X\cap (R_l\times R_r)$ with $P_m(a_x, b_x)\vee P_n(a_x, b_x)$, we have $P_l(g_X(a_x), g_X(b_x))$. Similarly, For $Y$, there is some $N_y\in \mathbb{N}$ such that $\Gamma_{N_y}\supset Y$. By a similar argument as above, there is some $g_Y\in G$ such that for every $(a_y, b_y)\in Y\cap (R_l\times R_r)$ with $P_m(a_y, b_y)\vee P_n(a_y, b_y)$, $P_l(g_Y(a_y), g_Y(b_y))$. Thus there is an isomorphism $\sigma: g_X(X)\longrightarrow g_Y(Y)$. Hence $g_Y \circ f=\sigma\circ g_X$, hence $f=g_Y^{-1}\circ \sigma\circ g_X$ and then $f\in G\upharpoonright X$. Since $X$ and $Y$ are arbitrary finite bipartite subgraphs of $\Gamma$ and $G$ is closed, so for any $f\in Sym_{\{l, r\}}(\Gamma)$ preserving $P_k$ for some $k\in \{1, 2, 3\}$, $f\in G$. By Definition \ref{irre}, $G$ is reducible, a contradiction with our assumption.

\textbf{Property (2)}:   Suppose there is no such $(R_1, \alpha)$ in $\Gamma$, i.e.
for any finite bipartite $R\subset \Gamma$ and any
$\alpha\in{(\Gamma\backslash R)\cap R_l}$, there exist some
$\sigma\in S_3$ and $g\in G$ such that $g\upharpoonright {R\cup
\{\alpha\}}$ is a switch w.r.t. $\alpha$ according to $\sigma$, but there is no
$f\in G$ such that $f$ is a switch w.r.t. $\alpha$
according to $\sigma$. Let $\Gamma=\cup \Gamma_i$ as in the SFSP, then there exists a sequence
$\{f_i, \sigma_i\}\subset G$ such that $f_i\upharpoonright (\Gamma_i\cup\{ \alpha\})$
is a switch w.r.t. a single vertex $\alpha$ of $R_l$ according to $\sigma_i$. Since
$S_3$ is finite  but $\{f_i\}$ is infinite, we have $\{f_{i_j}\}\subseteq \{f_i\}$
and some $\sigma$ such that $f_{i_j}\upharpoonright (\Gamma_i\cup\{ \alpha\})$ is
a switch w.r.t. a single vertex $\alpha$ of $R_l$ according to $\sigma$. Since $G$ is closed, there exists a switch w.r.t. $\alpha$ according to $\sigma$ is also in
$G$. But this contradicts the assumption.

\textbf{Property (3)}: Similar to the previous proof of \textbf{Property (2)}.

 Now we choose $R = R_0 \cup R_1 \cup R_2$. By Claim \ref{S}, the set $R$ is sufficiently complex.
\end{proof}


\begin{lem}\label{redu}
Suppose $H_1, H_2\leq S_3$.  If $G=\langle
S_{\{l\}}^{H_1}, S_{\{r\}}^{H_2}\rangle$ is an irreducible closed subgroup in $Sym(\Gamma)$, then for any $f\in H_1$ and any $g\in H_2$ it is the case that $f\circ g=g\circ f$.
\end{lem}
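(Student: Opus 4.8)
The plan is to show that if $H_1$ and $H_2$ fail to commute, then the group $G=\langle S_{\{l\}}^{H_1}, S_{\{r\}}^{H_2}\rangle$ collapses a pair of colors on every finite configuration, and hence (arguing as in the proof of Theorem~\ref{existstence}, Property~(1)) turns out to be reducible, contradicting the hypothesis. The heart of the matter is a purely local computation: take $f\in H_1$ and $g\in H_2$ with $f\circ g\neq g\circ f$, pick a vertex $v\in R_l$ and a vertex $w\in R_r$, and consider the permutation $\pi\in Sym_{\{l,r\}}(\Gamma)$ obtained by composing the switch on $v$ according to $f$ with the switch on $w$ according to $g$. Since $v$ and $w$ lie on opposite sides, the cross-edge $(v,w)$ gets acted on by $f$ from the $R_l$-side switch and then by $g$ from the $R_r$-side switch, so its color is sent through $g\circ f$; but the order in which we perform the two switches does not change which composite acts on $(v,w)$ versus on the edges $(v,x)$ with $x\neq w$ (acted on by $f$ only) and $(y,w)$ with $y\neq v$ (acted on by $g$ only). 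The relevant observation is therefore not that the two switches fail to commute as group elements — they do commute — but that $G$ contains both a switch realizing $f$ on $\{v\}\subset R_l$ and one realizing $g$ on $\{w\}\subset R_r$, and by composing, conjugating, and taking limits we can force the color of the single edge $(v,w)$ to follow $f\circ g$ in one group element and $g\circ f$ in another, while all other edges are left essentially unconstrained.

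More precisely, I would proceed as follows. First, fix $f\in H_1$, $g\in H_2$ and a cross-edge $(v,w)$. The element $\pi_1=(\text{switch on }v\text{ by }f)\circ(\text{switch on }w\text{ by }g)$ lies in $G$ and sends the color of $(v,w)$ via $f\circ g$ (first $g$ from the $w$-switch, then $f$ from the $v$-switch, reading right to left), the color of $(v,x)$ via $f$ for $x\neq w$, and the color of $(y,w)$ via $g$ for $y\neq v$, leaving all other cross-edges fixed. Likewise $\pi_2=(\text{switch on }w\text{ by }g)\circ(\text{switch on }v\text{ by }f)$ lies in $G$ and does exactly the same thing except that it sends the color of $(v,w)$ via $g\circ f$. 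Hence $\pi_2^{-1}\circ\pi_1\in G$ is a permutation that fixes $R_l$ and $R_r$ pointwise, fixes the color of every cross-edge except $(v,w)$, and sends the color of $(v,w)$ via the nontrivial permutation $(g\circ f)^{-1}\circ(f\circ g)\in S_3$. In other words $G$ contains a \emph{single-edge recoloring} $\rho$: a map fixing all vertices and all cross-edge colors except on one edge, where the color there is permuted nontrivially.

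The second step is to harvest a contradiction from the existence of such a $\rho$. The nontrivial element $\tau=(g\circ f)^{-1}\circ(f\circ g)\in S_3$ moves some color, say $\tau(i)=j$ with $i\neq j$. Using transitivity of $Aut(\Gamma)$ on cross-edges of a fixed color, together with the extension property $\Theta_n$, one can conjugate $\rho$ so as to recolor any prescribed $i$-colored edge into color $j$ while leaving a given finite configuration otherwise intact; composing finitely many such conjugates, for any finite bipartite $X\subset\Gamma$ we obtain $g_X\in G$ that recolors every $i$-colored edge of $X$ to color $j$ and fixes the rest, so on $X$ the map $g_X$ witnesses that $G$ cannot tell colors $i$ and $j$ apart. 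Running the argument of Theorem~\ref{existstence}, Property~(1) — i.e. picking an arbitrary $f\colon X\to Y$ preserving $P_k$ for the third color $k$ and factoring it as $g_Y^{-1}\circ\sigma\circ g_X$ through the collapsed structures — shows every such $f$ lies in $G$, so $G$ preserves $P_k$ and nothing finer, making $G$ reducible. This contradicts the irreducibility hypothesis, so $f\circ g=g\circ f$ after all.

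The main obstacle I anticipate is the bookkeeping in the second step: verifying that the single-edge recoloring $\rho$ really can be conjugated and composed to collapse colors $i$ and $j$ on an arbitrary finite set without unwanted side effects, and that the resulting color-collapse is exactly of the shape needed to invoke the reducibility argument of Theorem~\ref{existstence}. One must be careful that $\tau$ genuinely is a transposition-like element that identifies exactly two colors (if $\tau$ were a $3$-cycle the collapse is even stronger and the same conclusion follows, but the homogeneity/permutation dichotomy of Claim~\ref{homogeneous} must be applied to the right coloring). Everything else — the fact that the two composite switches agree off the edge $(v,w)$, and that $G$ is closed so that the finite-support maps $g_X$ suffice — is routine.
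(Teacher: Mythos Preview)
Your overall strategy---build an element of $G$ that changes the colour of a single cross-edge while preserving all others, then iterate to strip one colour from any finite subgraph and invoke the reducibility argument of Theorem~\ref{existstence}---is exactly the paper's, and your second step is fine. The gap is in the construction of the single-edge recolourer $\rho$. Elements of $G\leq Sym_{\{l,r\}}(\Gamma)$ are permutations of the \emph{vertex set}; the edge colouring is induced, not independently permuted. Hence a map that ``fixes $R_l$ and $R_r$ pointwise'' is the identity and cannot alter any edge colour, so no such $\rho$ exists. More concretely, your colour computation for $\pi_1=\sigma_v\circ\sigma_w$ is wrong: $\sigma_w$ moves $v$ to some $\sigma_w(v)\neq v$ in general, and then $\sigma_v$, being a switch \emph{at $v$} (not at $\sigma_w(v)$), does nothing to the colour of the image edge $(\sigma_w(v),\sigma_w(w))$. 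So $\pi_1$ sends the colour of $(v,w)$ through $g$ alone, not $f\circ g$; symmetrically $\pi_2$ sends it through $f$ alone. You in fact noticed this in your first paragraph (``they do commute''), and that observation is what kills the two-switch commutator.

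The paper repairs this by chasing the images through \emph{four} switches: $g_1$ on $x$ by $f$, then $g_2$ on $g_1(y)$ by $g$, then $g_3$ on $g_2g_1(x)$ by $f^{-1}$, then $g_4$ on $g_3g_2g_1(y)$ by $g^{-1}$. Because each switch is taken at the \emph{current} location of the tracked vertex, the colour of the tracked edge is genuinely hit by $f$, then $g$, then $f^{-1}$, then $g^{-1}$, landing at $g^{-1}f^{-1}gf(c)\neq c$; every other edge sees either the cancelling pair $f,f^{-1}$ or the pair $g,g^{-1}$ (or nothing) and returns to its original colour. The resulting $h=g_4g_3g_2g_1\in G$ does move vertices, but it preserves every edge colour except that of $(x,y)$, and that is all one needs for the iteration you describe.
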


\begin{proof}
 Suppose not, then there exist $f\in H_1$, $g\in H_2$ such
that $f \circ g\neq g\circ f$. Then for
$\gamma=g^{-1}\circ f^{-1}\circ g\circ f$, there
must be some $c\in \{1, 2, 3\}$ such that $\gamma(c)\neq c$.
Choose $x\in R_l$, $y\in R_r$ such that $P_c(x, y)$. We construct an
$h\in G=\langle S_{\{l\}}^{H_1}, S_{\{r\}}^{H_2} \rangle$ which will be a composition
$h = g_4 \circ g_3 \circ g_2 \circ g_1$ of four switches $g_1, g_3\in S_{\{l\}}^{H_1}$, $g_2, g_4\in S_{\{r\}}^{H_2}$ on
single vertices. First let $g_1$ be a switch w.r.t. $x$   according to $f$, then let $g_2$ be a switch
w.r.t. $g_1(y)$  according to $g$.
Then let $g_3$ be a switch w.r.t. $g_2g_1(x)$   according to $f^{-1}$ and finally let $g_4$ be a
switch w.r.t. $g_3g_2g_1(y)$   according to
$g^{-1}$. Then for every $(a, b)\in R_l\times R_r$, if $(a,
b)\neq (x, y)$, then $P_c(a, b)\Longrightarrow P_c(h(a), h(b))$ but
$P_c(x, y)\Longrightarrow \neg P_c(h(x), h(y))$. Hence for any finite
bipartite $A\subset \Gamma$, we can construct a $g_A\in G$
such that $g_A(A)$ has one fewer edge with cross-type $c$. By repeating this process, we can find some $\overline{g}\in G$ such that
$\overline{g}(A)$ has no edge with cross-type $c$. Then $\Gamma$ cannot contain a sufficiently complex set. By Theorem \ref{existstence}, $G$ is reducible, a contradiction. This completes the proof of Lemma \ref{redu}.
\end{proof}

In particular we have:

$\langle S_{\{l\}}^{<(12)>}, S_{\{r\}}^{<(123)>}\rangle$ is reducible since
$(12)(123)=(23)\neq (13)=(123)(12)$.

$\langle S_{\{l\}}^{<(13)>}, S_{\{r\}}^{<(123)>}\rangle$ is reducible since
$(13)(123)=(12)\neq (23)=(123)(13)$.

$\langle S_{\{l\}}^{<(23)>}, S_{\{r\}}^{<(123)>}\rangle$ is reducible since
$(23)(123)=(13)\neq (12)=(123)(23)$.

$\langle S_{\{l\}}^{<(12)>}, S_{\{r\}}^{<(13)>}\rangle$ is reducible since
$(12)(13)=(132)\neq (123)=(13)(12)$.

$\langle S_{\{l\}}^{<(12)>}, S_{\{r\}}^{<(23)>}\rangle$ is reducible since
$(12)(23)=(123)\neq (132)=(23)(12)$.

$\langle S_{\{l\}}^{<(13)>}, S_{\{r\}}^{<(23)>}\rangle$ is reducible since
$(23)(13)=(123)\neq (132)=(13)(23)$.

\begin{lem}
The group  $S_{\{l,
r\}}^{S_3}=\langle S_{\{l\}}^{S_3}, S_{\{r\}}^{S_3}\rangle$ is the full symmetric group $Sym_{\{l, r\}}(\Gamma)$.
\end{lem}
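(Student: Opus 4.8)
The plan is to show that the closed group $S_{\{l,r\}}^{S_3}$ acts on $\Gamma$ with the same orbits on finite tuples as the full group $\mathrm{Sym}_{\{l,r\}}(\Gamma)$; since both groups are closed and the latter is the topological closure of its action on finite substructures, this forces equality. Because $S_{\{l,r\}}^{S_3}$ already contains $\mathrm{Aut}(\Gamma)$, it suffices to prove that for any two finite bipartite subgraphs $A, B \subset \Gamma$ with $|A \cap R_l| = |B \cap R_l|$ and $|A \cap R_r| = |B \cap R_r|$, and any bijection $\phi : A \to B$ preserving $R_l$ and $R_r$ (but not necessarily the $P_i$'s), there is some $h \in S_{\{l,r\}}^{S_3}$ with $h \upharpoonright A = \phi$.

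First I would fix an arbitrary cross-edge $(x,y) \in (A \cap R_l) \times (A \cap R_r)$ and recall the commutator construction from the proof of Lemma \ref{redu}: composing a switch on $x$ according to some $f \in S_3$, a switch on $y$ (moved appropriately) according to $g$, and the two inverse switches, one obtains an element of $\langle S_{\{l\}}^{S_3}, S_{\{r\}}^{S_3}\rangle$ that alters the cross-type of the single edge $(x,y)$ by the commutator $g^{-1}f^{-1}gf$ and fixes the cross-types of all other edges. Since $S_3$ is generated by its commutators together with... more carefully: the key point is that a switch on a single vertex $x \in R_l$ according to $\sigma$ changes the cross-type of \emph{every} edge incident to $x$ simultaneously by $\sigma$, whereas the commutator trick isolates a single edge. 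So the strategy is: using single-vertex switches together with the edge-isolating commutators, I can realize \emph{any} prescribed re-coloring of the cross-edges of $A$. Concretely, enumerate the cross-edges of $A$; for each one whose color must change, apply an edge-isolating element changing only that edge by the required permutation in $S_3$ (every element of $S_3$, including the transpositions, is a commutator in $S_3$ since $S_3$ is not abelian and $[S_3,S_3] = A_3$ — so transpositions are \emph{not} commutators; this needs care). Because transpositions lie outside $[S_3,S_3]$, I cannot isolate an edge and apply a transposition to it alone via a single commutator; instead I would first apply a global single-vertex switch by a transposition to fix up parity, then use $A_3$-valued commutators to correct the remaining edges individually, noting $A_3$-valued edge-isolating moves suffice to reach any coloring in the coset once the parity is matched — and in fact one can combine two single-vertex switches on $x$ and on $x'$ by transpositions $\tau, \tau'$ to change exactly the edges incident to $x$ or $x'$, giving enough flexibility. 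Iterating, I reach any target coloring on $A$, hence $\phi$ extends to an element of $S_{\{l,r\}}^{S_3}$.

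The main obstacle will be precisely this parity issue: since $S_3/A_3 \cong \mathbb{Z}/2$, the edge-isolating commutator construction only produces even permutations on a single edge, so a purely local argument does not immediately give every bijection. I would resolve it by observing that single-vertex switches by a transposition act on the star of a vertex and that, by first choosing switches on a suitable set of vertices of $R_l$ (and $R_r$) according to transpositions, one can match the "odd part" of the desired recoloring edge-by-edge — using that the incidence structure of a complete bipartite graph lets one hit any single target edge's parity by combining the stars of its two endpoints — and then clean up the rest with $A_3$-moves. Once the realizability of every $\phi$ on every finite $A$ is established, closedness of both groups finishes the proof: $\mathrm{Sym}_{\{l,r\}}(\Gamma)$ is the closure of the set of finite partial bijections preserving only $R_l, R_r$, and we have just shown every such partial bijection extends into $S_{\{l,r\}}^{S_3}$, so $S_{\{l,r\}}^{S_3} = \mathrm{Sym}_{\{l,r\}}(\Gamma)$.
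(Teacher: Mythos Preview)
Your overall plan---show that $S_{\{l,r\}}^{S_3}$ has the same orbits on finite tuples as $\mathrm{Sym}_{\{l,r\}}(\Gamma)$ by using the single-edge commutator construction of Lemma~\ref{redu}, then invoke closedness---is essentially the paper's approach. But your parity worry is a red herring: you never need to realize a transposition on a single edge. The commutator moves already give you all of $A_3$ on each fixed edge, and $A_3$ acts \emph{transitively} on $\{P_1,P_2,P_3\}$; so by iterating $3$-cycle moves edge by edge you can reach any target coloring from any starting coloring without ever needing an odd move. The paper carries this out in the cleanest form: it maps every finite $A$ to the all-$P_1$ graph using only $(123)$-moves (first send each $P_2$-edge to $P_3$, then each $P_3$-edge to $P_1$), and then links any two such images by an automorphism of $\Gamma$.

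Your proposed parity fix---first adjust by single-vertex transposition switches to make the residual even, then clean up with $A_3$---does not work as written: an arbitrary matrix $(\operatorname{sgn}\sigma_{ij})$ over $\mathbb{Z}/2$ need not decompose as $a_i+b_j$ (take the $2\times 2$ matrix with a single nonzero entry). Fortunately, as explained above, no such fix is needed once you notice that transitivity of $A_3$ on the three colors is all that is required.
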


\begin{proof}
It is enough to show that for any $(n\times m)$-bipartite
$A\subset\Gamma$ where $n, m\in \mathbb{N}$, there exists some $g_A\in
S_{\{l, r\}}^{S_3}$ such that $g_A(A)$ has  only a single cross-type $P_1$. Then for any two $(n\times m)$-bipartite graphs $B,
C$, we can find an automorphism $\sigma$ of $\Gamma$ sending $g_B(B)$ to $g_C(C)$, since each of these two subgraphs has only $P_1$ as cross-type. Then the map $f = g^{-1}_C \circ \sigma \circ g_B$ takes $B$ to $C$, and $f \in S_{\{l, r\}}^{S_3}$.
Then $S_{\{l, r\}}^{S_3}=Sym_{\{l, r\}}(\Gamma)$.

WLOG, suppose $A$ has three cross-types: $P_1, P_2$ and $P_3$. Let $f, g\in S_3$, $f=(123)$ and $g=(12)$. Then $f\circ g\neq g\circ f$ and let $\gamma=g^{-1}\circ f^{-1}\circ g\circ f$ ($=(123)$). Using the similar argument as in
the proof of Lemma \ref{redu}, for every finite bipartite subgraph $A\subset\Gamma$ we can construct
some $g\in S_{\{l, r\}}^{S_3}$ such that
$g(A)$ has no cross-edge with cross-type $P_2$. Similarly, using the similar argument as in
the proof of Lemma \ref{redu}, we can
construct some $f\in S_{\{l, r\}}^{S_3}$ such that $f(g(A))$ has no cross-edge with cross-type $P_3$. That is, for every
finite bipartite subgraph $A\subset\Gamma$, there exists $f\circ g\in S_{\{l, r\}}^{S_3}$ such that $h(A)$ has only a single cross-type $P_1$. This completes the proof of this Lemma.
\end{proof}

Then the candidates for nontrivial irreducible closed subgroups are:
\begin{itemize}
\item $S_{\{l\}}^{<(12)>}$, $S_{\{r\}}^{<(12)>}$ and $S_{\{l, r\}}^{<(12)>}(=\langle S_{\{l\}}^{<(12)>}, S_{\{r\}}^{<(12)>}\rangle)$;
\item $S_{\{l\}}^{<(13)>}$, $S_{\{r\}}^{<(13)>}$ and $S_{\{l, r\}}^{<(13)>}(=\langle S_{\{l\}}^{<(13)>}, S_{\{r\}}^{<(13)>}\rangle)$;
\item $S_{\{l\}}^{<(23)>}$, $S_{\{r\}}^{<(23)>}$ and $S_{\{l, r\}}^{<(23)>}(=\langle S_{\{l\}}^{<(23)>}, S_{\{r\}}^{<(23)>}\rangle)$;
\item $S_{\{l\}}^{<(123)>}$, $S_{\{r\}}^{<(123)>}$ and $S_{\{l, r\}}^{<(123)>}(=\langle S_{\{l\}}^{<(123)>},
S_{\{r\}}^{<(123)>}\rangle)$;
\item $S_{\{l\}}^{S_3}$ and $S_{\{r\}}^{S_3}$.
\end{itemize}


Note that when $\sigma\in S_3$ is nontrivial, $S_{\{l\}}^{<\sigma>}=\overline{\langle Aut(\Gamma),
h\rangle}$ where $h$ is a switch w.r.t. some subset of $R_l$ according to $\sigma$.

\begin{lem}\label{H12}
If $H_1\neq H_2$ are non-trivial subgroups of $S_3$, then $\langle S_{\{l\}}^{H_1}, S_{\{l\}}^{H_2}\rangle=S_{\{l\}}^{S_3}$.
\end{lem}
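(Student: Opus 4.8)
The plan is to show both inclusions. The inclusion $\langle S_{\{l\}}^{H_1}, S_{\{l\}}^{H_2}\rangle \subseteq S_{\{l\}}^{S_3}$ is immediate, since every switch w.r.t.\ a vertex of $R_l$ according to an element of $H_1$ or $H_2$ is in particular a switch according to an element of $S_3$, and $S_{\{l\}}^{S_3}$ is closed and contains $Aut(\Gamma)$. So the real content is the reverse inclusion, for which it suffices to produce, inside $\langle S_{\{l\}}^{H_1}, S_{\{l\}}^{H_2}\rangle$, a switch w.r.t.\ a single vertex of $R_l$ according to every nontrivial $\sigma\in S_3$ (equivalently, according to a set of generators of $S_3$); then closing under composition and taking topological closure gives all of $S_{\{l\}}^{S_3}$, using the remark that $S_{\{l\}}^{<\sigma>}=\overline{\langle Aut(\Gamma), h\rangle}$ for a switch $h$ according to $\sigma$.

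First I would enumerate the pairs $(H_1,H_2)$ of distinct nontrivial subgroups of $S_3$ up to the symmetry of the problem. The nontrivial subgroups are the three transpositions' groups $\langle(12)\rangle$, $\langle(13)\rangle$, $\langle(23)\rangle$, the cyclic group $\langle(123)\rangle$, and $S_3$ itself. If either $H_i$ equals $S_3$ there is nothing to prove, so the interesting cases are: two distinct transposition subgroups (three such pairs, all equivalent under relabeling colors), and one transposition subgroup together with $\langle(123)\rangle$ (three such pairs, again all equivalent under relabeling). The key algebraic fact is that in each case $H_1\cup H_2$ generates $S_3$ as an abstract group: two distinct transpositions generate $S_3$, and a transposition together with a 3-cycle generates $S_3$.

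The main step is then the following: if $g_1$ is a switch w.r.t.\ a vertex $v\in R_l$ according to $\tau_1\in H_1$ and $g_2$ is a switch w.r.t.\ the \emph{same} vertex $v$ (more precisely w.r.t.\ the image point, $g_2$ a switch w.r.t.\ $g_1(v)$) according to $\tau_2\in H_2$, then the composition $g_2\circ g_1$ agrees, on its action on cross-types incident to $v$, with a switch w.r.t.\ $v$ according to $\tau_2\tau_1$, while acting as an element of $Aut(\Gamma)$ elsewhere; hence $g_2\circ g_1\in S_{\{l\}}^{<\tau_2\tau_1>}\cdot Aut(\Gamma)\subseteq \langle S_{\{l\}}^{H_1},S_{\{l\}}^{H_2}\rangle$, and since a switch w.r.t.\ a single vertex according to $\tau_2\tau_1$ is thereby obtained in the group, we get $S_{\{l\}}^{<\tau_2\tau_1>}\subseteq\langle S_{\{l\}}^{H_1},S_{\{l\}}^{H_2}\rangle$. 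Iterating, every word in $\tau_1\in H_1$, $\tau_2\in H_2$ is realized as a single-vertex switch according to the corresponding product in $S_3$; since these products exhaust $S_3$, we obtain $S_{\{l\}}^{<\sigma>}\subseteq\langle S_{\{l\}}^{H_1},S_{\{l\}}^{H_2}\rangle$ for every $\sigma\in S_3$, and therefore $S_{\{l\}}^{S_3}\subseteq\langle S_{\{l\}}^{H_1},S_{\{l\}}^{H_2}\rangle$, as $S_{\{l\}}^{S_3}$ is generated by $Aut(\Gamma)$ together with all single-vertex switches according to arbitrary $\sigma\in S_3$.

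The step I expect to be the main obstacle — though it is conceptually routine — is verifying carefully that composing two single-vertex switches at the \emph{same} base vertex really does produce (modulo $Aut(\Gamma)$) a single-vertex switch according to the product permutation, with no unintended side effects on cross-edges not incident to $v$; this is exactly the kind of bookkeeping that the noncommutativity of $S_3$ makes delicate (as flagged in the paper's remark about switches being more complicated here), and one must be attentive to whether the second switch is taken w.r.t.\ $v$ or w.r.t.\ $g_1(v)$, and to the order in which the product $\tau_2\tau_1$ versus $\tau_1\tau_2$ appears. Once this composition lemma is pinned down, the rest is the finite group-theoretic check that $\langle H_1\cup H_2\rangle = S_3$ in every relevant case, which is immediate.
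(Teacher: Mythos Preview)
Your proposal is correct and follows essentially the same route as the paper: both arguments reduce to the finite group-theoretic fact that any two distinct nontrivial proper subgroups of $S_3$ generate all of $S_3$, and then lift this to the switch groups by realizing every $\sigma\in S_3$ as a composition of single-vertex switches (together with automorphisms). The paper's proof is terser---it simply asserts that every single-vertex switch according to any $\sigma\in S_3$ lies in $\overline{\langle Aut(\Gamma), f_1, f_2\rangle}$ without spelling out the composition---so the bookkeeping you flag as the ``main obstacle'' (composing a switch w.r.t.\ $v$ with one w.r.t.\ $g_1(v)$ and tracking the order of the product in $S_3$) is precisely the detail the paper leaves implicit; your version is, if anything, more careful on this point.
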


\begin{proof}
Let $H_1=<\sigma_1>$ and
$H_2=<\sigma_2>$ for $\sigma_1, \sigma_2\in S_3$, $\sigma_1\neq\sigma_2$ and $\sigma_1, \sigma_2\neq (1)$. There exist $f_1\in S_{\{l\}}^{H_1}$ which is a switch w.r.t. some vertex in $R_l$ according to $\sigma_1$ such that $S_{\{l\}}^{H_1}=\overline{\langle Aut(\Gamma), f_1\rangle}$, and $f_2\in S_{\{l\}}^{H_2}$ which is a switch w.r.t. some vertex in $R_l$ according to $\sigma_2$ such that $S_{\{l\}}^{H_2}= \overline{\langle Aut(\Gamma), f_2\rangle}$. Note that every two distinct proper subgroups of $S_3$   generate the whole $S_3$. Then every switch w.r.t. a single vertex in $R_l$ according to $\sigma\in S_3$ is generated by  the elements in $Aut(\Gamma)$ together with two additional elements  $f_1$ and $f_2$. Thus $\langle S_{\{l\}}^{H_1}, S_{\{l\}}^{H_2}\rangle= \overline{\langle Aut(\Gamma),
f_1, f_2\rangle}$ is the closed group generated by $Aut(\Gamma)$ and all the switches w.r.t. single vertex in $R_l$ according to some $\sigma\in S_3$, which is $S_{\{l\}}^{S_3}$ by definition.
\end{proof}

\begin{lem}\label{S_1}
Let $S=S_{\{l\}}^{<(12)>}(\Gamma)\cup S_{\{l\}}^{<(13)>}(\Gamma)\cup S_{\{l\}}^{<(23)>}(\Gamma)\cup S_{\{l\}}^{<(123)>}$, and $\sigma\in S_{\{l\}}^{S_3}(\Gamma)\backslash S$, then
$\overline{\langle Aut(\Gamma), \sigma\rangle}= S_{\{l\}}^{S_3}(\Gamma)$.
\end{lem}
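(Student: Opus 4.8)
The plan is as follows. Write $G=\overline{\langle Aut(\Gamma),\sigma\rangle}$; then $G\leq S_{\{l\}}^{S_3}(\Gamma)$ for free, since $\sigma$ and $Aut(\Gamma)$ lie in this closed subgroup, and the whole content is the reverse inclusion. I would use the description from Section~3 of an element $g\in S_{\{l\}}^{S_3}(\Gamma)$ by its colour twist $\rho_g\colon R_l\to S_3$, where $P_i(a,b)$ forces $P_{\rho_g(a)(i)}(g(a),g(b))$; here composition obeys $\rho_{g_2 g_1}(a)=\rho_{g_2}(\hat g_1(a))\circ\rho_{g_1}(a)$ (with $\hat g$ the underlying permutation of $g$), the automorphisms are exactly the $g$ with $\rho_g\equiv(1)$, and $S_{\{l\}}^{\langle\tau\rangle}(\Gamma)=\{g:\ \mathrm{image}(\rho_g)\subseteq\langle\tau\rangle\}$. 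From $\sigma\notin S$ this last equality gives $\mathrm{image}(\rho_\sigma)\not\subseteq H$ for every proper $H\leq S_3$; as the proper subgroups of $S_3$ are $\{(1)\}$, the three $\langle t\rangle$, and $\langle(123)\rangle$, a one-line check forces $\langle\mathrm{image}(\rho_\sigma)\rangle=S_3$. I would also want the following richness fact: if $g\in S_{\{l\}}^{S_3}(\Gamma)$ then every nonempty level set $\rho_g^{-1}(\tau)$ must meet every finite colour-pattern over a finite subset of $R_r$ (otherwise the $\rho_g$-twisted colouring would violate an extension axiom $\Theta_n$); consequently, since $\rho_\sigma$ is constant on each of its level sets, $\hat\sigma$ maps each such level set to a ``rich'' set of the same kind.

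The goal can now be reduced. By Lemma~\ref{H12}, $S_{\{l\}}^{S_3}(\Gamma)=\langle S_{\{l\}}^{H_1}(\Gamma),S_{\{l\}}^{H_2}(\Gamma)\rangle$ whenever $H_1\neq H_2$ are nontrivial subgroups of $S_3$, so it suffices to place two such $S_{\{l\}}^{H_i}(\Gamma)$ inside $G$. By the remark just before Lemma~\ref{H12} and closedness of $G$, one gets $S_{\{l\}}^{\langle\tau\rangle}(\Gamma)\leq G$ as soon as $G$ contains a single switch w.r.t.\ \emph{some} subset of $R_l$ according to $\tau$; and a switch on a \emph{finite} subset will already do, since conjugating by automorphisms and composing yields all switches on even-sized subsets, and a single-vertex switch is a pointwise limit of these (compose with a switch on a second vertex and let that vertex escape to infinity, using homogeneity of $\Gamma$).

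The main step is localization. For $\phi\in Aut(\Gamma)$ one computes
\[
\rho_{[\sigma,\phi]}(x)=\rho_\sigma\bigl(\hat\phi\,\hat\sigma^{-1}\hat\phi^{-1}(x)\bigr)\cdot\rho_\sigma\bigl(\hat\sigma^{-1}\hat\phi^{-1}(x)\bigr)^{-1},
\]
so $[\sigma,\phi]$ is supported on $\hat\phi\hat\sigma(D_\phi)$, where $D_\phi=\{y\in R_l:\rho_\sigma(\hat\phi(y))\neq\rho_\sigma(y)\}$, and takes the value $\rho_\sigma(\hat\phi(y))\cdot\rho_\sigma(y)^{-1}$ at $\hat\phi\hat\sigma(y)$ for $y\in D_\phi$. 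Using the richness of the level sets of $\rho_\sigma$ I would build, by a back-and-forth, an automorphism $\phi$ with $|D_\phi|=1$: one that changes the $\rho_\sigma$-level of exactly one vertex, from any chosen $c$ to any chosen $c'$, both in $\mathrm{image}(\rho_\sigma)$ (the cardinality bookkeeping goes through because the level sets are infinite). Then $[\sigma,\phi]$ is a single-vertex switch according to $c'c^{-1}$ followed by an automorphism, so $S_{\{l\}}^{\langle c'c^{-1}\rangle}(\Gamma)\leq G$ for all $c,c'\in\mathrm{image}(\rho_\sigma)$.

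It remains to secure two distinct nontrivial cyclic subgroups. Let $D=\{c'c^{-1}:c,c'\in\mathrm{image}(\rho_\sigma)\}$; by the previous paragraph $S_{\{l\}}^{\langle d\rangle}(\Gamma)\leq G$ for every $d\in D$. Since every proper subgroup of $S_3$ is cyclic, either $\langle D\rangle=S_3$, so $D$ is not contained in any one $\langle\tau\rangle$ and two of the subgroups just obtained have distinct generators, whence Lemma~\ref{H12} applies; or $\langle D\rangle=\langle d_0\rangle$ is proper with $d_0\in D$. In the second case $D\subseteq\langle d_0\rangle$ means $\mathrm{image}(\rho_\sigma)$ lies in a single coset $C$ of $\langle d_0\rangle$, with $C\neq\langle d_0\rangle$ because $\sigma\notin S_{\{l\}}^{\langle d_0\rangle}(\Gamma)$; fixing $\gamma\in C\setminus\langle d_0\rangle$, I would left-multiply $\sigma$ by an appropriate $g\in S_{\{l\}}^{\langle d_0\rangle}(\Gamma)$ whose level sets are the $\hat\sigma$-images of those of $\sigma$ --- legitimate by the richness fact --- so that $\rho_{g\sigma}\equiv\gamma$; then $g\sigma$ is a switch according to $\gamma$ on all of $R_l$ followed by an automorphism, so $S_{\{l\}}^{\langle\gamma\rangle}(\Gamma)\leq G$, and since $\langle\gamma\rangle\neq\langle d_0\rangle$ Lemma~\ref{H12} again finishes. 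I expect the localization step --- extracting a switch supported on a single vertex, which is possible only because the level sets of $\rho_\sigma$ are extension-rich --- to be the main obstacle; proving both ``membership forces richness'' and the back-and-forth existence of $\phi$ is the technical heart, and an alternative is to route the whole argument through the $(m\times n)$-analysis of Section~4 and the Ne\v{s}et\v{r}il--R\"{o}dl theorem, which are built for exactly this ``thin but dense'' situation.
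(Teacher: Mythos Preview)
Your overall architecture --- reduce to Lemma~\ref{H12} by producing switches for two distinct cyclic subgroups, and obtain these by localizing $\sigma$ via commutators $[\sigma,\phi]$ --- is natural and different from the paper's iterative reduction. However, the argument has a genuine gap at the ``richness'' step, and this step is load-bearing.

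The claim that every nonempty level set $\rho_g^{-1}(\tau)$ must meet every finite colour pattern over $R_r$ is false. Take $v_1\neq v_2\in R_l$ and let $\sigma$ be a switch at $v_1$ by $(12)$ composed with a switch at $v_2$ by $(13)$; then $\sigma\in S_{\{l\}}^{S_3}(\Gamma)$, and $\mathrm{image}(\rho_\sigma)=\{(1),(12),(13)\}$ generates $S_3$, so $\sigma\notin S$. But $\rho_\sigma^{-1}((12))=\{v_1\}$ and $\rho_\sigma^{-1}((13))=\{v_2\}$ are singletons. Your justification (``otherwise the $\rho_g$-twisted colouring would violate $\Theta_n$'') only shows that the \emph{twisted structure} is random, which it is; it does not force individual level sets to be extension-rich. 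For this $\sigma$ no automorphism $\phi$ with $|D_\phi|=1$ exists: a bijection moving exactly one vertex between level sets forces both level sets involved to be infinite, as a short counting argument shows. So the localization step fails outright, and your fallback in the coset case also invokes richness to build the correcting $g\in S_{\{l\}}^{\langle d_0\rangle}(\Gamma)$, so it does not rescue the argument.

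By contrast, the paper avoids any single-step localization. It partitions $R_l$ into the level sets $A_1,\dots,A_5$ of $\rho_\sigma$ corresponding to $(12),(13),(23),(123),(132)$ and, in the finite case, repeatedly composes $\sigma$ with automorphisms that send $\sigma(A_i)$ back to $A_i$ (possible because finite subsets of $R_l$ carry no induced edges, so any size-matching bijection extends). One such composition squares $\rho_\sigma$ pointwise, killing the transposition parts; further compositions shrink $A_1,A_2$ strictly at each pass until they are empty, leaving a genuine switch by $(23)$ on some subset of $R_l$. A symmetric run produces a $(13)$-switch, and Lemma~\ref{H12} finishes. The infinite-level-set case is then handled by taking closures of the finite approximations. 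If you want to salvage your commutator approach, you would first need a separate argument reducing to the situation where at least two level sets are infinite and rich --- and that reduction is essentially what the paper's iterative steps accomplish.
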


\begin{proof}
Recall that $S_{\{l\}}^H(\Gamma)$ for $H\leq S_3$ is generated by compositions of switches on singletons in $R_l$ together with automorphisms. We will show that any element which is in $\in S_{\{l\}}^{S_3}(\Gamma)\backslash S$ can be modified to produce elements in two distinct subgroups $S_{\{l\}}^{H_1}(\Gamma)$ and $S_{\{l\}}^{H_1}(\Gamma)$ where $H_1\neq H_2$. Hence we can get all of $ S_{\{l\}}^{S_3}(\Gamma)$ as closure. Let $\sigma\in S_{\{l\}}^{S_3}$ but not in any of $S_{\{l\}}^{H}$ for every $H<S_3$.
By considering the action of $\sigma$ on each edge, we can find sets  $\{A_1, A_2, A_3, A_4, A_5\}$ where $A_i\subseteq R_l$ for $1\leq i\leq 5$ and $A_j\cap A_k=\emptyset$ for $j\neq k$ ($A_i$ could be empty for $1\leq i\leq 5$, but there are at least two distinct $j, k\in\{1, 2, 3, 4, 5\}$ such that $A_j$ and $A_k$ are nonempty) such that $\sigma$ sends the cross-types on the cross-edges with exactly one endpoint in $A_1$ to the  cross-types dictated by $(12)$, the  cross-types on the cross-edges with exactly one endpoint in $A_2$ change the cross-types according to $(13)$, the  cross-types on the cross-edges with exactly one endpoint in $A_3$ change the  cross-types according to $(23)$, the  cross-types on the cross-edges with exactly one endpoint in $A_4$ change the  cross-types according to $(123)$ and the  cross-types on the cross-edges with exactly one endpoint in $A_5$ change the  cross-types according to $(132)$.

\textbf{Case 1} Assume $0<|A_i|<\omega$ for $i=1, \dots, 5$.

Step $1$: There exists some
$f_1\in Aut(\Gamma)$ such that $f_1(\sigma(A_i))=A_i$. Let
$h_1=\sigma\circ f_1\circ \sigma$, then $h_1$ is a switch w.r.t. $A_4$
according to $(132)$ and w.r.t. $A_5$ according to $(123)$.

Step $2$: Similarly, there exists some $f_2\in Aut(\Gamma)$ such that
$f_2(\sigma(A_i))=A_i$ for $i=1, \dots, 5$, and let $h_2=h_1\circ f_2$, then $h_2$ is a
switch w.r.t. $\sigma(A_4)$ by $(132)$ and w.r.t. $\sigma(A_5)$ by
$(123)$. Let $h_3=h_2\circ \sigma$, then $h_3$ is a switch w.r.t.
$A_1$ according to $(12)$, w.r.t. $A_2$ according to $(13)$ and
w.r.t. $A_3$ according to $(23)$.

Step $3$: There exists $f_3\in Aut(\Gamma)$ such that $f_3(A_4)\cap A_1\neq\emptyset$ but $f_3(A_4)\cap A_i=\emptyset$ for $i=2, 3$, and $f_3(A_5)\cap A_2\neq\emptyset$ but  $f_3(A_5)\cap A_i=\emptyset$ for $i=1, 3$ and $f_3(A_5)\cap f_3(A_4)=\emptyset$. There exists $f_4\in Aut(\Gamma)$ such that $f_4(h_3\circ f_3(A_j))=A_j$ for $j=4, 5$ and $f_4\circ h_3(A_1)\cap A_5\doteq\emptyset, f_4\circ h_3(A_5)\cap A_4\doteq\emptyset$. Let $h_4=h_1\circ f_4\circ h_3$. By the definition of $h_3, f_4, h_1$ and the fact that $(132)(12)=(23)$ and $(123)(13)=(23)$. Now $h_4$ is a switch w.r.t. $A_1\backslash f_3(A_4)\subset A_1$ according to $(12)$, w.r.t. $A_2\backslash f_3(A_5)\subset A_2$ according to $(13)$, and w.r.t. $f_3(A_4)\backslash A_1$ according to $(132)$, w.r.t. $f_3(A_5)\backslash A_2$ according to $(123)$, and w.r.t. $A_3\cup (A_1\cap f_3(A_4))\cup (A_2\cap f_3(A_5))\supset A$ according to $(23)$.


Step $4$: Now we let $A_1^1=A_1\backslash f_3(A_4)$, $A_2^1=A_2\backslash f_3(A_5)$, $A_3^1=A_3\cup (A_1\cap f_3(A_4))\cup (A_2\cap f_3(A_5))$, and $A_4^1=f_3(A_5)\backslash A_2$, $A_5^1=f_3(A_4)\backslash A_1$, and let $h_4=\sigma_1$. Note that $A_1^1\subset A_1, A_2^1\subset A_2$. Then $\sigma_1$ is a switch w.r.t.t $A_1^1$ according to $(12)$, w.r.t. $A_2^1$ according to $(13)$, w.r.t. $A_3^1$ according to $(23)$, w.r.t. $A_4^1$ according to $(123)$ and w.r.t. $A_5^1$ according to $(132)$. Now follow Step $1-3$, we can get a switch w.r.t. $A_1^1$ according to $(12)$, w.r.t. $(A_2^1)$ according to $(13)$ and w.r.t. $(A_3^1)$ according to $(23)$.

Step $5$: Since $|A_i|<\omega$ for $i=1, 2$, we can follow steps $1-4$ finitely many times to obtain a sequence of $A_1^i, A_2^i, A_3^i\subset\Gamma$, ending with  $A_1^N=\emptyset$ and $A_2^N=\emptyset$. Then we now have a switch $g$ w.r.t. $A_3^N$ according to $(23)$.

Similarly, we  produce a switch $h$ w.r.t. some subset of $R_l$ according to $(13)$.  Then $\overline{\langle Aut(\Gamma),
\sigma\rangle}=\overline{\langle Aut(\Gamma), g, h,
\sigma\rangle}=\langle S_{\{l\}}^{<(23)>}, S_{\{l\}}^{<(13)>}\rangle$. By Lemma \ref{H12} $\overline{\langle Aut(\Gamma), g, h,
\sigma\rangle}=\langle S_{\{l\}}^{<(23)>}, S_{\{l\}}^{<(13)>}\rangle= S_{\{l, r\}}^{S_3}$, and this completes the proof of \textbf{Case 1}.

\textbf{Case 2} If all $A_k$'s are finite but some $A_k$ is  empty, then we   follow the proof   above, only with  fewer steps.

\textbf{Case 3} If there exists some $k$ with  $|A_k|=\omega$, then since $A_k$ is
the ``limit'' of its finite approximations $A_k'$,  we can deal
with  its finite subsets $A_k'$ with the method in \textbf{Case 1}, then take the
limit (which must lie in the closure).

This completes the proof of Lemma \ref{S_1}.
\end{proof}


\begin{thm}\label{Gin}
Let $G$ be an irreducible closed subgroup of $S_{\{l\}}^{S_3}(\Gamma)$ containing $Aut(\Gamma)$. Then there exists $H\leq S_3$
such that $G=S_{\{l\}}^{H}(\Gamma)$.
\end{thm}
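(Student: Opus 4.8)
The plan is to reduce $G$ to one of the six candidate groups $S_{\{l\}}^{H}(\Gamma)$ by analyzing the switches that $G$ contains. First I would invoke Theorem \ref{existstence} to obtain a pair $(R,\alpha)$ that is sufficiently complex with respect to $G$; since $G\leq S_{\{l\}}^{S_3}(\Gamma)$, only switches on the left side are relevant, so I focus on Property $(2)$ and put $\alpha\in R_l$. Set $H = \{\sigma\in S_3 : \text{some switch w.r.t.\ a singleton in } R_l \text{ according to } \sigma \text{ lies in } G\}$. The first claim is that $H$ is a subgroup of $S_3$: closure under composition follows because composing a switch w.r.t.\ $v$ by $\sigma$ with a switch w.r.t.\ $v$ by $\tau$ (after conjugating by an automorphism moving the image back) gives a switch w.r.t.\ $v$ by $\sigma\tau$, and closure under inverse is similar. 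By definition of $S_{\{l\}}^{H}(\Gamma)$ we immediately get $S_{\{l\}}^{H}(\Gamma)\leq G$, so the whole content is the reverse inclusion $G\leq S_{\{l\}}^{H}(\Gamma)$.

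For the reverse inclusion I would argue as follows. Take any $\sigma\in G$. If $\sigma\in S_{\{l\}}^{H'}(\Gamma)$ for some proper subgroup $H'<S_3$ with $H'\subseteq H$, we are done since then $\sigma\in S_{\{l\}}^{H}(\Gamma)$; if $\sigma$ is not in any $S_{\{l\}}^{H'}(\Gamma)$ for $H'<S_3$, then $\sigma$ lies in $S_{\{l\}}^{S_3}(\Gamma)\setminus S$ in the notation of Lemma \ref{S_1}, so $\overline{\langle Aut(\Gamma),\sigma\rangle} = S_{\{l\}}^{S_3}(\Gamma)\leq G$, forcing $H = S_3$ and again $\sigma\in S_{\{l\}}^{H}(\Gamma)$. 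The remaining case is that $\sigma$ lies in some $S_{\{l\}}^{H'}(\Gamma)$ but $H'\not\subseteq H$: here I would use the sufficiently complex pair to derive a contradiction. Concretely, if $\sigma$ decomposes (by the action-on-edges analysis of Lemma \ref{S_1}) into switches according to $\sigma_1,\dots,\sigma_5$ on disjoint pieces $A_1,\dots,A_5$, and some $\sigma_i\notin H$, then by composing with automorphisms one manufactures an element of $G$ that acts as a single switch on a singleton according to $\sigma_i$ — using the homogeneity/transitivity of $Aut(\Gamma)$ to isolate one piece at a time, exactly as in Steps $1$–$5$ of Lemma \ref{S_1} — contradicting $\sigma_i\notin H$. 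So every $\sigma\in G$ lies in $S_{\{l\}}^{H}(\Gamma)$.

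Finally I should check the degenerate possibilities: if $H$ is trivial then the argument shows $G$ contains no nontrivial singleton switch, and combined with $G\leq S_{\{l\}}^{S_3}(\Gamma)$ and closedness this forces $G = Aut(\Gamma) = S_{\{l\}}^{\{(1)\}}(\Gamma)$ (using that any $g\in G$ agreeing with an automorphism on every finite set is an automorphism). If $H$ has order $2$ or $3$ the identification $G = S_{\{l\}}^{H}(\Gamma)$ is exactly the content above, and if $H = S_3$ then Lemma \ref{S_1} and Lemma \ref{H12} give $G = S_{\{l\}}^{S_3}(\Gamma)$.

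\textbf{Main obstacle.} The delicate step is the last case of the reverse inclusion: showing that an arbitrary $\sigma\in G$ whose edge-action uses a permutation $\sigma_i\notin H$ can actually be whittled down, using only automorphisms and elements already known to be in $G$, to a genuine singleton switch according to $\sigma_i$. This is where one must carefully mimic the bookkeeping of Lemma \ref{S_1} (splitting the support into finite pieces, peeling them off one at a time, and passing to the closure when a support is infinite), and where the sufficiently complex pair $(R,\alpha)$ from Theorem \ref{existstence} is essential to guarantee that $G$ is not already "blind" to the color being permuted. I expect the write-up of this reduction to be the longest part of the proof; everything else is either definitional or a direct appeal to Lemmas \ref{H12} and \ref{S_1}.
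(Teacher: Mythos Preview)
Your approach is essentially the paper's, reorganized around an explicit definition of $H$ (the set of $\sigma\in S_3$ for which $G$ contains a singleton left-switch by $\sigma$) rather than the paper's top-down case split on whether $G$ lies inside some proper $S_{\{l\}}^{H}(\Gamma)$. Both routes rest on Lemma~\ref{S_1} for elements not contained in any proper $S_{\{l\}}^{H'}(\Gamma)$, together with a simpler reduction --- quoted in the paper as a lemma from the earlier two-color chapter --- that a single nontrivial switch on any subset of $R_l$ by $\sigma$ already generates all of $S_{\{l\}}^{\langle\sigma\rangle}(\Gamma)$. Your case-3 sketch (``isolate one piece at a time'') is precisely this latter lemma; note that Steps~1--5 of Lemma~\ref{S_1} treat the genuinely mixed situation and are not quite what is needed when $\sigma$ already sits inside a proper $S_{\{l\}}^{H'}(\Gamma)$ with $|H'|\in\{2,3\}$, where the reduction is more direct.

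One inessential detour: you invoke Theorem~\ref{existstence} to obtain a sufficiently complex pair, but the paper's proof of Theorem~\ref{Gin} does not use that machinery at all --- irreducibility is a hypothesis, and sufficiently complex pairs enter only later (Lemma~\ref{Slrgroups} and Theorem~\ref{final}). You can drop that step without loss.
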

\begin{proof}
Let $G$ be an irreducible closed proper subgroup of $S_{\{l\}}^{S_3}(\Gamma)$ containing $Aut(\Gamma)$. If there is some nontrivial $H\leq S_3$ such that $G\subseteq S_{\{l\}}^H(\Gamma)$, then if $G$ is not $Aut(\Gamma)$, we have $G=S_{\{l\}}^{H}(\Gamma)$. Suppose not, i.e. $G\subset S_{\{l\}}^{H}(\Gamma)$. Then every $g\in G\backslash Aut(\Gamma)$ is a composition of switches w.r.t. permutations in $H$. If $|H|=2$, then $g$ is also a switch w.r.t. some subset $A\subseteq R_l$ according to some $\sigma\in H$. Then by Lemma \ref{SSAlG} $G\geq \overline{\langle Aut(\Gamma), g\rangle}=S_{\{l\}}^H(\Gamma)$, contradicting the assumption that $G\subset S_{\{l\}}^{H}(\Gamma)$. If  $|H|= 3$, we can use an argument similar to that in Lemma \ref{S_1} to get  some $g'\in\overline{ \langle Aut(\Gamma), g\rangle}$ such that $g'\in G\backslash Aut(\Gamma)$ is a switch w.r.t. some subset $A\subseteq R_l$ according to some $\sigma\in H$. Then by Lemma \ref{SSAlG} $G\geq \overline{\langle Aut(\Gamma), g'\rangle}=S_{\{l\}}^H(\Gamma)$, contradicting with the assumption that $G\subset S_{\{l\}}^{H}(\Gamma)$.

If such $H$ does not exist, then there exists some element $f\in G$ but $f\notin S_{\{l\}}^{K}(\Gamma)$ for any proper $K\leq S_3$. Then by Lemma \ref{S_1}, there exist at least two nontrivial $H_1, H_2\leq S_3$ with $H_1\neq H_2$ such that $f\in \langle S_{\{l\}}^{H_1}(\Gamma),S_{\{l\}}^{H_2}(\Gamma)\rangle$, thus $G\geq S_{\{l\}}^{S_3}(\Gamma)$, a contradiction.

\end{proof}

\section{Switch Groups as Irreducible Closed Subgroups}
Now we show that the the various switch groups are exactly the irreducible closed subgroups of $Sym(\Gamma)$ where $\Gamma$ is the random bipartite graph having three cross-types.

\begin{lem}\label{Slrgroups}
Let $G$ be an irreducible closed subgroup of $Sym_{\{l, r\}}(\Gamma)$ containing $Aut(\Gamma)$, and suppose there exist a finite bipartite subgraph $R\subset\Gamma$ and
$\alpha\in \Gamma\backslash R$   such that
\renewcommand{\labelenumi}{(\arabic{enumi})}
\begin{enumerate}
\item $(R, \alpha)$ is a sufficiently complex subgraph with respect to $G$,
\item for any $\pi\in G$ such that $\pi\upharpoonright R$ is an
isomorphism, $\pi\upharpoonright{R\cup\{\alpha\}}$ is a
switch w.r.t. $\alpha$ according to some $\sigma\in S_3$ ($\sigma$ could be the identity).
\end{enumerate}

Then $G=\langle S_{\{l\}}^{H_1}(\Gamma), S_{\{r\}}^{H_2}(\Gamma)\rangle$ for some $H_1, H_2\leq
S_3$. If $G\subset Sym_{\{l, r\}}(\Gamma)$, then $H_1=H_2$ unless one of the two groups is trivial.
\end{lem}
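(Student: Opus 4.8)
The plan is to extract a concrete set of switch generators for $G$ from the sufficiently complex data $(R,\alpha)$, and then to assemble $G$ from the two ``sides'' $S_{\{l\}}^{H_1}$ and $S_{\{r\}}^{H_2}$. First I would define $H_1\leq S_3$ to be the set of all $\sigma\in S_3$ such that there exists $\pi\in G$ with $\pi\!\upharpoonright\!(R\cup\{\alpha\})$ a switch w.r.t.\ $\alpha\in R_l$ according to $\sigma$ (using an $\alpha\in R_l$ witnessing Property (2)), and symmetrically define $H_2\leq S_3$ using a $\beta\in R_r$ witnessing Property (3). The first step is to check $H_1$ and $H_2$ are genuinely subgroups of $S_3$: closure under composition comes from composing the corresponding $\pi$'s (after correcting by an automorphism of $\Gamma$ fixing $R$ pointwise so that the composite again restricts to a switch on $R\cup\{\alpha\}$, which is possible because $R$ is finite and $\Gamma$ is homogeneous), and closure under inverse is similar. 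The sufficiently complex hypothesis — specifically that $R$ witnesses all three cross-types — is what guarantees that a $\pi$ restricting to a switch according to $\sigma$ on $R\cup\{\alpha\}$ forces, via clause (2) of ``sufficiently complex,'' the full switch on singletons according to $\sigma$ to lie in $G$; hence $S_{\{l\}}^{H_1}(\Gamma)\leq G$ and $S_{\{r\}}^{H_2}(\Gamma)\leq G$, so $\langle S_{\{l\}}^{H_1}(\Gamma),S_{\{r\}}^{H_2}(\Gamma)\rangle\leq G$.

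For the reverse inclusion I would argue that $G$ is closed and that every $g\in G$ is, locally, a composition of side-switches. Take any finite bipartite $A\subset\Gamma$ with $A\supseteq R$ (enlarging $A$ is harmless by Claim~\ref{S}). Given $g\in G$, the image $g\!\upharpoonright\! A$ is some bijection preserving $R_l,R_r$; I want to show it agrees on $A$ with an element of $\langle S_{\{l\}}^{H_1},S_{\{r\}}^{H_2}\rangle$. The idea is to ``undo'' $g$ one vertex at a time: pick an enumeration $v_1,\dots,v_m$ of $A$, and at stage $i$ use hypothesis (2) of the lemma — that whenever $\pi\in G$ restricts to an isomorphism on $R$, it restricts to a single-vertex switch on $R\cup\{v\}$ — together with Theorem~\ref{existstence}/Claim~\ref{S} applied to $R\cup\{v_1,\dots,v_{i}\}$, to peel off a switch according to some $\sigma_i\in S_3$ at $v_i$; the key point is that this $\sigma_i$ must lie in $H_1$ (resp.\ $H_2$) by the definition of $H_1,H_2$ and the fact that $R$ already witnesses all cross-types, so there is no ``room'' for a switch outside $H_1,H_2$. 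After $m$ steps $g$ has been reduced to an isomorphism on $A$, i.e.\ to an element of $Aut(\Gamma)\!\upharpoonright\!A$, and running the bookkeeping backward writes $g\!\upharpoonright\! A$ as a composition of automorphisms and side-switches according to elements of $H_1$ and $H_2$. Since $A$ was an arbitrary finite superset of $R$ and $G$ (and $\langle S_{\{l\}}^{H_1},S_{\{r\}}^{H_2}\rangle$) are closed, $G\leq\langle S_{\{l\}}^{H_1}(\Gamma),S_{\{r\}}^{H_2}(\Gamma)\rangle$, giving equality.

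Finally, for the ``$H_1=H_2$ unless one is trivial'' clause when $G\subsetneq Sym_{\{l,r\}}(\Gamma)$: suppose $H_1$ and $H_2$ are both nontrivial. By Lemma~\ref{redu}, since $G=\langle S_{\{l\}}^{H_1},S_{\{r\}}^{H_2}\rangle$ is an irreducible closed subgroup, every $f\in H_1$ and $g\in H_2$ commute in $S_3$. Running through the subgroups of $S_3$, two nontrivial commuting subgroups must either be equal or generate $S_3$ with one contained in the centralizer of the other — and in $S_3$ the only nontrivial subgroup whose elements commute with the elements of a different nontrivial subgroup does not exist (the reflections don't commute with each other, none commutes with $\langle(123)\rangle$, and the center is trivial), so $H_1=H_2$ is forced; if instead $H_1\neq H_2$ were both nontrivial, Lemma~\ref{H12} would give $\langle S_{\{l\}}^{H_1},S_{\{l\}}^{H_2}\rangle=S_{\{l\}}^{S_3}$ and similarly on the right, whence $G$ would contain $\langle S_{\{l\}}^{S_3},S_{\{r\}}^{S_3}\rangle=Sym_{\{l,r\}}(\Gamma)$, contradicting $G\subsetneq Sym_{\{l,r\}}(\Gamma)$. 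I expect the main obstacle to be the middle paragraph — making the vertex-by-vertex peeling argument precise, in particular verifying that the switch permutation extracted at each stage genuinely lands in $H_1$ or $H_2$ rather than in some larger set, and that the ``correcting automorphism'' steps can always be carried out compatibly; this is where the full strength of ``sufficiently complex'' (all three cross-types witnessed inside $R$) and hypothesis (2) of the lemma are really needed.
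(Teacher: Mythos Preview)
Your overall architecture is right, and the inclusion $\langle S_{\{l\}}^{H_1},S_{\{r\}}^{H_2}\rangle\leq G$ as well as the final clause (via Lemma~\ref{redu}) are fine. The genuine gap is in the reverse inclusion, specifically the \emph{base case} of your peeling argument.

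Hypothesis~(2) of the lemma only tells you something when $\pi\upharpoonright R$ is already an \emph{isomorphism}. For an arbitrary $g\in G$ there is no reason whatsoever that $g\upharpoonright R$ (or $g$ restricted to any isomorphic copy of $R$) should be an isomorphism; $g$ may scramble the cross-types on $R$ in a completely unstructured way. Your enumeration $v_1,\dots,v_m$ of $A$ and ``peel off a switch at $v_i$'' presupposes that after stage $i-1$ the corrected map is an isomorphism on some copy of $R$, but you never establish stage~$0$. Property~(1) of ``sufficiently complex'' (that $g(R)$ witnesses all three cross-types) does not help here: it prevents colors from being collapsed globally, but it does not force $g\upharpoonright R$ to be an isomorphism or even a single color-permutation.

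The paper fills exactly this hole with the Ne\v{s}et\v{r}il--R\"odl theorem. One first chooses $N$ large (via SFSP) so that $\Gamma_N$ is sufficiently complex and, crucially, so that for every new vertex $x$ and every pair $y,z$ there are embedded copies $(R_y,x)\cong(R,\alpha)\cong(R_z,x)$ inside $\Gamma_N$ with all three cross-types between $x$ and $R_y\cap R_z$. Then, given $\pi\in G$, Ne\v{s}et\v{r}il--R\"odl produces a copy $\bar\Gamma_N\cong\Gamma_N$ on which the edge coloring $\chi\circ\pi$ is homogeneous with respect to $\chi$; since $\bar\Gamma_N$ witnesses all cross-types, Claim~\ref{homogeneous} forces $\chi\circ\pi$ to be a \emph{permutation} of $\chi$ on $\bar\Gamma_N$, i.e.\ $\pi$ acts there as a single global switch by some $\sigma$. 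One then corrects by an element $\phi\in G\cap S_{\{l\}}^{H_1}$ so that $\phi^{-1}\pi$ is an isomorphism on $\Gamma_N$, and \emph{only now} does the vertex-by-vertex induction begin, using the pre-arranged embedded copies $(R_y,x),(R_z,x)$ to invoke hypothesis~(2) at each new vertex. Your sketch also glosses over this second point: hypothesis~(2) is stated for the fixed pair $(R,\alpha)$, not for arbitrary enlargements $R\cup\{v_1,\dots,v_i\}$, so the induction must proceed through embedded isomorphic copies of $(R,\alpha)$, and arranging enough such copies with overlapping cross-types (so that the extracted $\sigma$ is well-defined on all of $\Gamma_{N+1}$) is precisely why $N$ has to be chosen carefully. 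Finally, note that the paper defines $H_1,H_2$ via $G\cap S_{\{l\}}^{S_3}$ and Theorem~\ref{Gin}, rather than directly from $(R,\alpha)$; your direct definition is equivalent, but showing that the peeled-off $\sigma_i$ actually lands in your $H_1$ still implicitly uses that $G\cap S_{\{l\}}^{S_3}=S_{\{l\}}^{H_1}$, which is the content of Theorem~\ref{Gin}.
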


\begin{proof}
Since $G\cap S_{\{l\}}^{S_3}(\Gamma)$ is an irreducible closed subgroup of $S_{\{l\}}^{S_3}(\Gamma)$, by Theorem \ref{Gin} there exists $H_1\leq S_3$ such that $S_{\{l\}}^{H_1}(\Gamma)=G\cap S_{\{l\}}^{S_3}(\Gamma)$. Similarly, there exists $H_2\leq S_3$ such that $S_{\{r\}}^{H_1}(\Gamma)=G\cap S_{\{r\}}^{S_3}(\Gamma)$. Now let $\pi\in G$ be given. Choose $N$ large enough such that for our fixed enumeration
$\Gamma=\cup\Gamma_i$, if $i\geq N$  then
\renewcommand{\labelenumi}{(\arabic{enumi})}
\begin{enumerate}
\item $\Gamma_i$ is sufficiently complex,
\item for any $x, y, z\in \Gamma_i$ with $x$ on the same side as $\alpha$ and $x\neq y, z$, there exists subgraphs $R_y$ and $R_z$ of $\Gamma_i$ with $y\in R_y, z\in R_z, x\notin R_y\cup
R_z, (R_y, x)\cong (R, \alpha)\cong (R_z, x)$, and with cross-edges between $x$ and $R_y\cap R_z$ of all three cross-types.
\end{enumerate}
We can find an $N$ such that $(1)$ holds by Theorem
\ref{existstence} and SFSP. By the extension property of $\Gamma$, $(2)$
holds in $\Gamma$, and hence by the Strong Finite Submodel Property,
$(2)$ holds for $\Gamma_i$  for all large $i$.

Now look at $\Gamma_N$, and define the coloring $\chi$ on $[\Gamma_N]^{\leq2}$ by
\begin{enumerate}
\item for every $x\in \Gamma_N$,
 \begin{itemize}
 \item if $x\in R_l$, then $\chi(x)=L$;
 \item if $x\in R_r$, then $\chi(x)=R$.
 \end{itemize}
\item for every $\{a, b\}\in [\Gamma_N]^{2}$,
\begin{itemize}
\item if $\{a, b\}\in [R_l]^2$, then $\chi(a, b)=l$;
\item if $\{a, b\}\in [R_r]^2$, then $\chi(a, b)=r$;
\item if $(a, b)\in R_l\times R_r$ and $P_1(a, b)$, then $\chi(a, b)=P_1$;
\item if $(a, b)\in R_l\times R_r$ and $P_2(a, b)$, then $\chi(a, b)=P_2$;
\item if $(a, b)\in R_l\times R_r$ and $P_3(a, b)$, then $\chi(a, b)=P_3$.
\end{itemize}
\end{enumerate}

Let $(\Gamma_N, \chi)$ be the $\alpha$-pattern P. By the
Ne\v{s}et\v{r}il-R\"{o}dl theorem, there exists a $\alpha$-Pattern $Q$, with the underlying set $X$, such that for any partition $F: [X]^2\longrightarrow \{P_1, P_2, P_3, l, r\}$, there exists $\bar{\Gamma_N}\subset X$ such that
\begin{itemize}
\item $(\bar{\Gamma_N}, \chi)$ has the $\alpha$-pattern P (hence $\bar{\Gamma_N}\cong \Gamma_N$);
\item $(\bar{\Gamma_N}, \chi)$ is $F$-homogeneous.
\end{itemize}
We define $F=\chi\circ\pi$ on $[\Gamma]^2$ for every $\{a, b\}\in [\Gamma]^2$ by
\begin{itemize}
\item if $\{a, b\}\in [R_l]^2$, then $F(a, b)=l$;
\item if $\{a, b\}\in [R_r]^2$, then $F(a, b)=r$;
\item if $P_1(\pi(a), \pi(b))$, then $F(a, b)=P_1$;
\item if $P_2(\pi(a), \pi(b))$, then $F(a, b)=P_2$;
\item if $P_3(\pi(a), \pi(b))$, then $F(a, b)=P_3$;
\end{itemize}
Since $(\bar{\Gamma_N}, \chi)$ is $F$-homogeneous, $\chi$ is homogeneous w.r.t. $\chi\circ\pi$. Since $\Gamma_N$ is sufficiently complex and $\bar{\Gamma_N}\cong\Gamma_N$, $\bar{\Gamma_N}$ witnesses all the cross-types. Hence $\chi\circ\pi$ does not lose any
cross-types. By Claim \ref{homogeneous}, we have $\chi\circ\pi$
is a permutation of $\chi$. So $\pi$ is a switch w.r.t. all the
vertices in $R_l$ according to some $\sigma\in S_3$. Since $\pi\in G\cap S_{\{l\}}^{S_3}(\Gamma)$, such $\sigma\in H_1$. Hence $\pi\in
S_{\{l\}}^{H_1}(\Gamma)\upharpoonright \bar{\Gamma_N}$.

Since $\Gamma_N$ is sufficiently complex, it witnesses which
switches are in $G$, so there must be some $\phi\in G\cap
S_{\{l\}}^{H_1}(\Gamma)$ such that $\phi^{-1}\circ\pi$ is an isomorphism
on $\Gamma_N$. Let $\pi_1=\phi^{-1}\circ\pi$;  we show that
$\pi_1$ is a switch map.

Write $\Gamma_{N+1}=\Gamma_N\cup\{x\}$. WLOG, we suppose $x\in R_l$.
If $\pi_1\upharpoonright \Gamma_{N+1}$ is an isomorphism, then it is
trivially a switch. If not, then let $y, z\in \Gamma_N$ be arbitrary. By
the choice of $N$ there exists subgraphs $y\in R_y\subset\Gamma_N$
and $z\in R_z\subset\Gamma_N$ such that there are all cross-types
between $\{x\}$ and $R_y\cap R_z$. $\pi_1\upharpoonright
\Gamma_N$ is an isomorphism, and $R_y, R_z, (R_y, x), (R_z, x)$ are
sufficiently complex with $R_y\cong R$ and $R_z\cong R$. Since $\pi_1\upharpoonright R_y$ is an isomorphism,
$\pi_1\upharpoonright R_y\cup\{x\}$ is a switch w.r.t. $x$ according to some $\sigma_1\in H_1$. Similarly, $\pi_1\upharpoonright R_z\cup\{x\}$ is a
switch w.r.t. $x$ according to some $\sigma_2\in H_1$. But $\sigma_1=\sigma_2$ since
$\pi_1\upharpoonright R_y\cup\{x\}$ has to agree with
$\pi_1\upharpoonright R_z\cup \{x\}$ on $R_y\cap R_z$. Since $y, z$
are arbitrary, $\pi_1\upharpoonright \Gamma_{N+1}$ is a switch w.r.t. $x$ according to some $\sigma'\in S_3$. $(R_y, x)$ witnesses the fact that this switch is in $G$, so
there exists some $\phi_1\in G$ such that $\phi_1$ is a switch w.r.t. $x$ according to $\sigma'$. Since $\phi_1\in  G\cap S_{\{l\}}^{S_3}(\Gamma)$, such $\sigma'\in H_1$. Hence $\phi_1^{-1}\circ\pi_1$ is an isomorphism on
$\Gamma_{N+1}$. Let $\pi_2=\phi_1^{-1}\circ\pi_1$.

Write $\Gamma_{N+2}=\Gamma_{N+1}\cup\{x'\}$ for $x'\in R_r$.
Similarly, we get $\pi_2\upharpoonright \Gamma_{N+2}$ is a switch
w.r.t. $x'$. So there exists some $\phi_2\in G$ and
$\phi_2\upharpoonright \Gamma_{N+2}$ is a switch w.r.t. $x'$ and
$\phi_2'\circ\pi_2$ is an isomorphism on $\Gamma_{N+2}$. By
induction, $\pi\upharpoonright \Gamma_{N+k}$ is a composition of
switches on vertices in $\Gamma_{N_k}\backslash \Gamma_N$. Since
$\langle S_{\{l\}}^{H_1}(\Gamma), S_{\{r\}}^{H_2}(\Gamma)\rangle$ is closed,
$\pi$ is a switch. We have shown that it is a composition of
switches on the vertices in $R_l$ and switches on the vertices
in $R_r$. Since the choice of $\pi$ is arbitrary, $G\subseteq
\langle S_{\{l\}}^{H_1}(\Gamma), S_{\{r\}}^{H_2}(\Gamma)\rangle$. But
$S_{\{l\}}^{H_1}(\Gamma), S_{\{r\}}^{H_1}(\Gamma)\leq G$, so we have $G=\langle
S_{\{l\}}^{H_1}(\Gamma), S_{\{r\}}^{H_2}(\Gamma)\rangle$. If $G\subset Sym_{\{l, r\}}(\Gamma)$,
then by Lemma \ref{H12}, $H_1=H_2$ unless one of them is trivial .
\end{proof}


\section{Switches Are the Only Nontrivial Closed Groups}
Now we show that there are no other irreducible closed subgroups.

\begin{lem}\label{lemmaA}
Let $\Gamma=\cup\Gamma_i$ as in SFSP, and let $G$ be an irreducible closed subgroup of $Sym_{\{l, r\}}(\Gamma)$ containing $Aut(\Gamma)$. There is an integer $N$ such that $i\geq N$ implies

$(\star)$ If $\alpha\in \Gamma_i$ and $g\in G$ is an isomorphism on
$\Gamma_i\backslash\{\alpha\}$, then there are cross-edges
between $g(\Gamma_i\backslash \{\alpha\})$ and $g(\alpha)$ with all
cross-types.
\end{lem}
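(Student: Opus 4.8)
The plan is to argue by contradiction, using the Strong Finite Submodel Property (Theorem \ref{sfsp}) together with the fact that an irreducible closed group admits a sufficiently complex pair (Theorem \ref{existstence}). Suppose no such $N$ exists: then for infinitely many indices $i$ there are $\alpha_i\in\Gamma_i$ and $g_i\in G$ such that $g_i$ is an isomorphism on $\Gamma_i\setminus\{\alpha_i\}$ but the cross-edges between $g_i(\Gamma_i\setminus\{\alpha_i\})$ and $g_i(\alpha_i)$ omit some cross-type. Since $\alpha_i$ lies on one of the two sides and there are only three cross-types, by passing to a subsequence I may assume all the $\alpha_i$ lie (say) in $R_l$ and that the missing cross-type is always the same colour $c\in\{P_1,P_2,P_3\}$; that is, for the vertex $w_i=g_i(\alpha_i)\in R_r$, the colour $c$ does not occur on any edge from $w_i$ into $g_i(\Gamma_i\setminus\{\alpha_i\})$.

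Next I would leverage this to show that $G$ collapses the colour $c$ on arbitrarily large finite bipartite subgraphs, contradicting the existence of a sufficiently complex set. Fix a finite bipartite $B\subset\Gamma$; choose $i$ large with $\Gamma_i\supseteq B$, and consider the $\alpha$-pattern of $\Gamma_i$ (encoding sides by a vertex colouring and cross-types by an edge colouring, exactly as in the proofs of Theorem \ref{existstence} and Lemma \ref{Slrgroups}). Apply the Ne\v{s}et\v{r}il--R\"odl theorem to obtain $B_i\subset\Gamma$ such that for every colouring $F$ of $[B_i]^2$ there is an $F$-homogeneous copy $\Gamma_i'\cong\Gamma_i$ inside $B_i$. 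Take $g\in G$ witnessing the hypothesis on $B_i$ — i.e. $g$ is an isomorphism off one vertex $\alpha$ and colour $c$ is absent between $g(\alpha)$ and $g(B_i\setminus\{\alpha\})$ — and set $F=\chi\circ g$. Then on the homogeneous copy $\Gamma_i'$ the colouring $\chi$ is homogeneous with respect to $\chi\circ g$; since the edges at the single vertex $\alpha$ cannot, by themselves, restore a globally missing colour to the whole of $\Gamma_i'$, the map $\chi\circ g$ loses a cross-type on $\Gamma_i'$, so by Claim \ref{homogeneous} it is not a permutation of $\chi$ and in fact two source colours $P_m,P_n$ get sent into a single colour $P_l$ on $\Gamma_i'$. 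As in the Property (1) argument of Theorem \ref{existstence}, this forces, for arbitrary finite $X\subset\Gamma$, some $g_X\in G$ under which $g_X(X)$ has strictly fewer edges of a fixed cross-type; iterating, one produces for every finite $A\subset\Gamma$ a $\bar g\in G$ with $\bar g(A)$ missing a cross-type entirely.

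Finally, the existence of such maps for every finite $A$ contradicts Theorem \ref{existstence}: by that theorem $G$ has a sufficiently complex pair $(R,\alpha_0)$, and in particular a finite $R$ all of whose $G$-images witness all three cross-types — which is exactly what $\bar g(R)$ fails to do. Hence the assumption was false and the desired $N$ exists. The main obstacle I anticipate is the bookkeeping in the middle step: one must be careful that the single excluded vertex $\alpha$ really cannot account for a cross-type that is otherwise absent on the large homogeneous subgraph, so that Claim \ref{homogeneous} genuinely applies; this is where the size of $\Gamma_i$ (chosen large via SFSP, so that $\Gamma_i\setminus\{\alpha\}$ still contains many disjoint copies of the relevant configurations) does the work, and it should be spelled out that a bounded-degree perturbation at one vertex cannot create a colour class out of nothing on a graph whose colour classes are all large.
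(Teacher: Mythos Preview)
Your overall shape --- assume failure for arbitrarily large $i$, pin down a fixed side and a fixed missing colour $c$, and then contradict the existence of a sufficiently complex $R$ --- matches the paper. But the engine you propose for the middle step does not run.

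The difficulty is that the witnesses $g_i$ are \emph{isomorphisms} on $\Gamma_i\setminus\{\alpha_i\}$. Hence $\chi\circ g$ agrees with $\chi$ on every edge not incident to $\alpha$, so on any homogeneous copy $\Gamma_i'\subseteq B_i$ the colouring $\chi\circ g$ still attains all three cross-types (indeed it equals $\chi$ off a single vertex). There is no ``globally missing colour'' on $\Gamma_i'$, and Claim~\ref{homogeneous} gives you nothing: $\chi\circ g$ \emph{is} a permutation of $\chi$ (in fact the identity) on the bulk of $\Gamma_i'$. Ne\v{s}et\v{r}il--R\"odl is the wrong tool here precisely because the hypothesis is that $g$ is almost an automorphism; Ramsey arguments detect global colour collapse, not a local anomaly at one vertex. (There is also a smaller issue: you invoke ``$g$ witnessing the hypothesis on $B_i$'', but the hypothesis only supplies bad $g$'s on the particular $\Gamma_j$ in the chain, with $\alpha_j$ not of your choosing; you never arrange $\alpha_j\in B_i$.)

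The paper exploits the local anomaly directly. Fix a sufficiently complex $R$ and use SFSP to find $N$ so that for $i\ge N$, any $|R|$-sized bipartite graph $B$ and any vertex $b\in B$ on the same side as $\alpha$ can be embedded into $\Gamma_i$ with $b\mapsto\alpha$. Then $f=g\circ\phi\in G$ is an isomorphism on $B\setminus\{b\}$ and has \emph{no} $c$-edge at $f(b)$. Now iterate: apply such a map once for each vertex of $B$ on that side, each time clearing $c$ at one more vertex while preserving the already-cleared ones (the cleared vertices lie on the same side as the new target, so their cross-edges are untouched by the next step). The composite $f^\ast\in G$ sends $B$ to a graph with no $c$-edges at all; taking $B=R$ contradicts Property~(1) of sufficient complexity. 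No Ramsey theory is needed --- the whole point is that a one-vertex defect can be transported to any prescribed vertex via an embedding, and then composed.
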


\begin{proof}
Let $R$ be sufficiently complex. By the extension properties $\Theta_n$ for $n\in\mathbb{N}$, the
following sentence is true in $\Gamma$:

$(\ast)$ For any $\alpha\in \Gamma$ and any bipartite subgraph $B\subset\Gamma$ with $|B|=|R|$ and any
vertex $b\in B$ on the same side as $\alpha$, there is some
embedding $\phi: B\longrightarrow \Gamma$ such that
$\phi(b)=\alpha$.

By SFSP, there exists $N\in\mathbb{N}$ such that when $i\geq N$,
$(\ast)$ is true for $\Gamma_i$.

Suppose for any $M\in \mathbb{N}$, there exist $i\geq M$ and some $g\in
G$ which is an isomorphism on $\Gamma_i\backslash\{\alpha\}$, but for some
$c\in\{P_1, P_2, P_3\}$, there is no cross-edge in $g(\Gamma_i)$
with endpoint $g(\alpha)$ having the cross-type $c$. For any
subgraph $B\subset\Gamma$ with $|B|=|R|$ and any $b$ in the same side as $\alpha$, there is an
isomorphism $\phi\in Aut(\Gamma)$ with $\phi(B)\subseteq\Gamma_i$ such
that $\phi(b)=\alpha$. If $f=g\circ\phi\in G$, then $f(B)$
has no cross-edge with cross-type $c$ and endpoint $f(b)$. By a composition of such
maps, one for each vertex of $B$ in the same side as $\alpha$, we
have a $f^{\ast}$ such that $f^{\ast}(B)$ has no cross-type $c$.
Since $R$ is a special case of $B$, we can find a $f_0^{\ast}\in
G$ such that $f_0^{\ast}(R)$ has no cross-edge with cross-type $c$. But
$R$ is sufficiently complex, and witnesses the fact that $G$ is irreducible,
and we have reached a contradiction.
\end{proof}

\begin{lem}\label{lemmaB}
Let $\Gamma=\cup\Gamma_i$ given by SFSP, and let $G$ be an irreducible closed group of $Sym_{\{l, r\}}(\Gamma)$ containing $Aut(\Gamma)$. Then there is an integer $N$ such that for any
$i\geq N$:
\renewcommand{\labelenumi}{(\arabic{enumi})}
\begin{enumerate}
\item There is no $g\in G$ which is an isomorphism on $\Gamma_i$
except for its effect on one cross-edge.
\item For every $\alpha\in \Gamma$, there is no $g\in G$ which is an isomorphism on
$\Gamma_i\backslash\{\alpha\}$ and for which there exists a switch $f$
w.r.t. $g(\alpha)$ such that $f\circ g$ is an isomorphism on $\Gamma_i$
except for exactly one cross-edge.
\end{enumerate}
\end{lem}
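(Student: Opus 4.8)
The plan is to prove both parts by contradiction, in each case amplifying a single mis‑coloured cross‑edge into the loss of an entire cross‑type on a set that is sufficiently complex with respect to $G$, which contradicts Theorem~\ref{existstence}; recall that if $R$ is sufficiently complex then $g(R)$ carries cross‑edges of all three colours for every $g\in G$. Throughout I will use $\Gamma=\bigcup\Gamma_i$ from the SFSP, together with the fact that for $i$ large $\Gamma_i$ satisfies any prescribed extension axiom $\Theta_n$, so that a fixed finite configuration embeds into $\Gamma_i$ and the embedding extends to an automorphism of $\Gamma$ by homogeneity.

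For part (1), suppose for contradiction that for arbitrarily large $i$ there is $g_i\in G$ whose restriction to $\Gamma_i$ is an isomorphism except that it recolours one cross‑edge, say from $c_i$ to $c_i'\neq c_i$; passing to a subsequence I may fix the pair $(c,c')$. The key step is to show that for every finite bipartite $A\subseteq\Gamma$ carrying a colour‑$c$ cross‑edge $(a,b)$ there is $h_A\in G$ with $h_A\!\upharpoonright\!A$ an isomorphism except that it recolours $(a,b)$ to $c'$: I would take $i$ large enough that $\Gamma_i\models\Theta_{|A|}$, let $(p_i,q_i)$ be the (colour‑$c$) cross‑edge of $\Gamma_i$ whose colour $g_i$ changes, extend the partial isomorphism $a\mapsto p_i$, $b\mapsto q_i$ to an embedding $\phi\colon A\hookrightarrow\Gamma_i$, extend $\phi$ to $\Phi\in Aut(\Gamma)\le G$, and set $h_A:=g_i\circ\Phi$; every cross‑edge of $A$ other than $(a,b)$ is carried by $\Phi$ to a cross‑edge of $\Gamma_i$ distinct from $(p_i,q_i)$, on which $g_i$ is colour‑preserving. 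Applying this repeatedly, starting from a sufficiently complex $R$ and each time feeding in the current image of $R$, decreases the number of colour‑$c$ cross‑edges by one, so after finitely many steps I reach $h\in G$ with $h(R)$ carrying no colour‑$c$ cross‑edge, the desired contradiction; this yields the $N$ for (1).

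For part (2), suppose for contradiction that for arbitrarily large $i$ there are $\alpha_i$ and $g_i\in G$ which is an isomorphism on $\Gamma_i\setminus\{\alpha_i\}$, together with a switch $f_i$ w.r.t. $g_i(\alpha_i)$ according to some $\sigma_i\in S_3$, with $f_ig_i\!\upharpoonright\!\Gamma_i$ an isomorphism except on one cross‑edge, which is then necessarily incident to $\alpha_i$. By symmetry I assume $\alpha_i\in R_l$ for infinitely many $i$ and pigeonhole so that $\sigma_i=\sigma$ is fixed; since a witness with $\sigma=\mathrm{id}$ would make $g_i$ itself an isomorphism on $\Gamma_i$ except on one cross‑edge, which part (1) forbids for large $i$, I may assume $\sigma\neq\mathrm{id}$, and then on the star of $\alpha_i$ the map $g_i$ recolours every cross‑edge by $\sigma^{-1}$ except at one ``error'' vertex $y_i$, where it disagrees with $\sigma^{-1}$. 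I would proceed in two steps. First, show that every switch at a single vertex of $R_l$ according to $\sigma$ lies in $G$: transporting $\Gamma_j$ into a large $\Gamma_i$ by an automorphism $\Phi$ sending a fixed $\beta\in R_l$ to $\alpha_i$ and avoiding $y_i$ makes $g_i\Phi\!\upharpoonright\!\Gamma_j$ an exact switch at $\beta$ according to $\sigma^{-1}$; correcting $g_i\Phi$ by an automorphism so that it agrees on $\Gamma_j$ with one fixed global switch at $\beta$ according to $\sigma^{-1}$, and letting $j\to\infty$, closedness of $G$ puts that switch in $G$, whence conjugation by $Aut(\Gamma)$ together with the definition of $S^{\langle\sigma\rangle}_{\{l\}}(\Gamma)$ gives all the required single‑vertex switches. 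Second, for arbitrarily large $j$ choose $i\gg j$ and an automorphism $\Phi$ with $\beta\mapsto\alpha_i$ which this time hits $y_i$ at exactly one cross‑edge of $\beta$ (possible because $\Gamma_j$ realises all three colours at $\beta$); then $g_i\Phi\!\upharpoonright\!\Gamma_j$ is a switch at $\beta$ according to $\sigma^{-1}$ off exactly one cross‑edge, and post‑composing with the switch at $g_i(\alpha_i)\in R_l$ according to $\sigma$, available by the first step, cancels the uniform $\sigma^{-1}$‑twist on the whole star and leaves an element of $G$ that is an isomorphism on $\Gamma_j$ except on a single cross‑edge, contradicting part (1). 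The case $\alpha_i\in R_r$ is symmetric, using $S^{\langle\sigma\rangle}_{\{r\}}(\Gamma)$.

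The step I expect to be the main obstacle is the bookkeeping in part (2): $g_i$ is only controlled on the specific set $\Gamma_i$, so the auxiliary automorphisms must carry the relevant copy of $\Gamma_j$ into $\Gamma_i$ with $\beta\mapsto\alpha_i$ and with the error vertex $y_i$ either missed (to extract a genuine single‑vertex switch into $G$) or met at one prescribed cross‑edge (to confine the mis‑colouring), and I must verify that the closure step asks, for each finite set, only for a single element of $G$ agreeing there with the target switch — which is exactly what the corrective automorphism supplies. Existence of all these automorphisms is precisely where the SFSP (making $\Gamma_i$ satisfy the needed $\Theta_n$ for $i$ large), the extension property, and homogeneity of $\Gamma$ enter.
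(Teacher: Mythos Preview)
Your argument is correct, but it takes a longer route than the paper's. The paper unifies the two parts by observing that (1) is the special case of (2) with $f$ the identity, and then argues (2) directly: fixing a sufficiently complex pair $(C,\gamma)$, it uses SFSP to choose $N$ so that every $\Gamma_i$ with $i\ge N$ contains, for any two vertices $\alpha,\beta$ on the side of $\gamma$, a copy $A\cong C$ with $\alpha,\beta\notin A$ and $(A,\alpha)\cong(C,\gamma)$. Given a bad $g$ with error edge $(\alpha,\beta)$, the restriction $g\!\upharpoonright\!(A\cup\{\alpha\})$ is then a \emph{pure} switch at $\alpha$ according to the permutation of $f^{-1}$; since $(A,\alpha)\cong(C,\gamma)$ is sufficiently complex, condition~(2) of that definition immediately yields $f^{-1}\in G$, hence $f\in G$, and $h=f\circ g\in G$ recolours exactly one cross-edge. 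The final amplification step (transporting a fixed edge of any $|C|$-graph to the bad edge and iterating to erase a colour from a sufficiently complex set) is then identical to your part~(1).

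The substantive difference is how the switch gets into $G$. You rebuild this from scratch via a closure argument (embed $\Gamma_j$ into $\Gamma_i$ avoiding the error vertex, correct by an automorphism, let $j\to\infty$), whereas the paper invokes the already-established witnessing property of a sufficiently complex pair in one line. Your detour is sound but unnecessary here; it essentially re-proves part of Theorem~\ref{existstence}. Likewise, your separation of (1) and (2) with a reduction of (2) to (1) works, but the paper's single argument is shorter. What your approach does buy is a slightly more self-contained proof that does not lean on the precise form of ``sufficiently complex'', at the cost of the extra bookkeeping you flagged.
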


\begin{proof}
Let $(C, \gamma)$ where $C\subset\Gamma$ and
$\gamma\in\Gamma\backslash C$ be sufficiently complex. Since
$\Gamma$ has the extension property, the following is true in
$\Gamma$:
\renewcommand{\labelenumi}{(\alph{enumi})}
\begin{enumerate}
\item for any $|C|-$graph $C'$ with three cross-types, and any
cross-edge $(a, b)$ of $\Gamma$, and any cross-edge $(a', b')\in C'$ with the same cross-type as $(a, b)$,
there is an embedding $\phi: C'\longrightarrow \Gamma$ such that
$\phi(a', b')=(a, b)$.
\item For any two vertices $\alpha,\beta$ on the same side of $\Gamma$ as is $\gamma$, there is a
finite subgraph $A\subset\Gamma$ such that $\alpha, \beta\notin A$ and
$A\cong C$ can be extended to $A\cup\{\alpha\}\cong C\cup\{\gamma\}$.
\end{enumerate}

By SFSP, there exists some $N\in\mathbb{N}$ such that $i\geq N$
implies that $\Gamma_i$ has the properties $(a)$ and $(b)$. We show
that the same $N$ will satisfy Lemma \ref{lemmaB}.

Since $(2)$ implies $(1)$ when a switch is trivial, we suppose
$(2)$ does not hold, i.e. for any $N\in\mathbb{N}$ there are $i\geq N, g\in G$, and $f$
such that $g$ is an isomorphism
on $\Gamma_i\backslash\{\alpha\}$, $f$ is a
switch w.r.t. $g(\alpha)$ and $f\circ g$ is an isomorphism
except on one cross-edge of $\Gamma_i$. One endpoint must be
$\alpha$, let the other endpoint be $\beta$. By $(b)$ above, there
exists some $A\cong C$ and $\alpha, \beta\notin A$ such that
$A\cup\{\alpha\}\cong C\cup\{\gamma\}$.

Now $g\upharpoonright A\cup\{\alpha\}$ is a switch with the same
permutation of cross-types as that of $f^{-1}$. But
$A\cup\{\alpha\}\cong C\cup\{\gamma\}$ which witnesses which
switches are in $G$. So $f^{-1}\in G$,
hence $f\in G$. Let $h=f\circ g$. Then $h\in G$, but this leads to a
contradiction since for any $|C|-$graph $C'$ having three cross-types, and any cross-edge $(\alpha',
\beta')\in C'\cap (R_l\times R_r)$ with $P_i(\alpha, \beta)$ and $P_i(\alpha', \beta')$ for some $i\in\{1, 2, 3\}$, by $(a)$ there is an embedding an embedding $\phi: C'\longrightarrow \Gamma$ such that
$\phi(\alpha', \beta')=(\alpha, \beta)$. Since $\Gamma$ is homogeneous, there is some $\Phi\in Aut(\Gamma)$ such that $\Phi\upharpoonright C'=\phi$. Now let $h_1=h\circ \Phi$. Then $h_1\in G$ and $h_1\upharpoonright C'$
is an isomorphism except on the cross-edge $(\alpha', \beta')$. Hence $h(C')$
has one fewer cross-edge with cross-type $P_i$ on it. If there are $n$ many cross-edges with cross-type $P_i$ in $C'$, then by repeating this argument $n$ times, we can construct $h_n\in G$ such that $h_n(C')$ has no cross-edge with cross-type $P_i$. Since $C$ is a special
case of $C'$, we have the same result for $C$. But since $C$ is sufficiently complex, it witnesses all the cross-types in $C$ and we have a
contradiction, and $N$ is as desired.
\end{proof}

Given a bipartite graph $A\subseteq\Gamma$, we define a class of vertex colorings $\Phi: A\longrightarrow \{L, \overline{P_1}, \overline{P_2}, \overline{P_3}\}$ for every $x\in A$ by
\begin{itemize}
\item if $x\in R_l$, then $\Phi(x)=L$;
\item if $x\in R_r$, then $\Phi(x)=\overline{P_i}$ for some $i=1, 2, 3$.
\end{itemize}
\begin{defn}\label{vpermutation}
Let $A_1$ be a bipartite graph with vertex coloring $\Phi_1$, and let $A_2$ be a bipartite graph with vertex coloring $\Phi_2$, where $\Phi_1$ is defined as above and $\Phi_2=\Phi\circ g$ for some $g\in Sym(A_1)$ preserving $R_l, R_r$. If $|A_1|=|A_2|$ and $|A_1\cap R_l|=|A_2\cap R_l|$, then the vertex coloring $\Phi_2$ is a \textit{permutation}
of the vertex coloring $\chi_1$ if there is some vertex bijection $\phi:
A_1\longrightarrow A_2$ preserving $R_l, R_r$ and some permutation $\sigma\in S_3$
such that for any vertex $x\in R_r$, $\Phi_1(x)=\sigma(\Phi_2(\phi(x)))$.
\end{defn}

\begin{defn}
Let $A$ be  a bipartite graph, and $\Phi_1, \Phi_2$  be vertex colorings on $A$ defined in Definition \ref{vpermutation}. Then the vertex coloring $\Phi_2$ is \textit{homogeneous} w.r.t. the coloring $\Phi_1$ if for any $x, x'\in A\cap R_r$, $\Phi_2(x)=\Phi_2(x')\Longrightarrow \Phi_1(x)=\Phi_1(x')$.
\end{defn}

\begin{claim}\label{vhomogeneous}
If $A$ is a bipartite graph, $\Phi_1, \Phi_2$ are the vertex colorings on $A$ defined as above, and $\Phi_2$ is homogenous w.r.t. $\chi_1$ but is not a permutation of $\Phi_1$, then there must be two distinct colors $\overline{P_i}$ and $\overline{P_j}$ and some color $\overline{P_k}$ ($i, j, k\in \{1, 2, 3\}$) such that for any $x\in A\cap R_r$, $\Phi_2(x)=\overline{P_i}$ or $\Phi_2(x)=\overline{P_j}$ implies $\Phi_1(x)=\overline{P_k}$.
\end{claim}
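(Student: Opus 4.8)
The plan is to run the same argument that proves Claim~\ref{homogeneous}, now at the level of vertices on the right side: the homogeneity hypothesis says precisely that $\Phi_1$ is constant on the fibres of $\Phi_2$, so it descends to a function on colours, and the failure of $\Phi_2$ to be a permutation of $\Phi_1$ forces that function to identify two colours.

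First I would record the basic set-up. Since $\Phi_2 = \Phi\circ g$ with $g$ preserving $R_l$ and $R_r$, both $\Phi_1$ and $\Phi_2$ take values in $\{\overline{P_1},\overline{P_2},\overline{P_3}\}$ on $A\cap R_r$. The homogeneity of $\Phi_2$ w.r.t.\ $\Phi_1$, namely that $\Phi_2(x)=\Phi_2(x')$ implies $\Phi_1(x)=\Phi_1(x')$ for $x,x'\in A\cap R_r$, is exactly the statement that there is a well-defined map
\[
f:\ \Phi_2(A\cap R_r)\ \longrightarrow\ \{\overline{P_1},\overline{P_2},\overline{P_3}\},\qquad \Phi_1(x)=f(\Phi_2(x))\ \text{ for all }x\in A\cap R_r .
\]

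Next I would argue by contraposition on the phrase ``not a permutation''. Suppose $f$ is injective. Its domain $\Phi_2(A\cap R_r)$ is a subset of the three-element set $\{\overline{P_1},\overline{P_2},\overline{P_3}\}$, and $f$ is an injection of that subset into the same three-element set, so $f$ extends to some permutation $\sigma\in S_3$ of $\{\overline{P_1},\overline{P_2},\overline{P_3}\}$. Taking $\phi=\mathrm{id}_A$ in Definition~\ref{vpermutation} (the cardinality conditions $|A_1|=|A_2|$, $|A_1\cap R_l|=|A_2\cap R_l|$ are trivially met since $A_1=A_2=A$), the relation $\Phi_1(x)=\sigma(\Phi_2(\phi(x)))$ holds for every $x\in A\cap R_r$, so $\Phi_2$ is a permutation of $\Phi_1$, contradicting the hypothesis. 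Hence $f$ is not injective: there exist distinct $\overline{P_i},\overline{P_j}\in\Phi_2(A\cap R_r)$ with $f(\overline{P_i})=f(\overline{P_j})$; call this common value $\overline{P_k}$. Then for any $x\in A\cap R_r$, $\Phi_2(x)=\overline{P_i}$ or $\Phi_2(x)=\overline{P_j}$ gives $\Phi_1(x)=f(\Phi_2(x))=\overline{P_k}$, which is the asserted conclusion.

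The only point requiring any care — the ``main obstacle'', such as it is — is verifying that when $f$ is injective the definition of ``permutation'' is genuinely satisfied: one must check that it is legitimate to take $\phi$ to be the identity and to extend $f$ to an element of $S_3$. Both are immediate here, the first because $A_1$ and $A_2$ are the same graph and the second because any injection between subsets of a three-element set extends to a bijection of that set. Everything else is a routine unwinding of the definitions, entirely parallel to the proof of Claim~\ref{homogeneous}.
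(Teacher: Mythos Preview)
Your proposal is correct and is precisely the argument the paper has in mind: the paper's own proof is the single sentence ``This follows immediately from the definitions of homogeneous and permutation colorings,'' and your write-up is simply a careful unwinding of that sentence---homogeneity gives a well-defined colour map $f$, and failure to be a permutation forces $f$ to be non-injective.
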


\begin{proof}
This follows immediately from the definitions  of homogeneous and permutation colorings.
\end{proof}

Given a bipartite graph $A\subseteq\Gamma$, we define an edge coloring $\chi: A\longrightarrow \{l, r, P_1, P_2, P_3\}$ for every $\{a, b\}\in A$ by
\begin{itemize}
\item if $\{a, b\}\subseteq R_l$, then $\chi(a, b)=l$;
\item if $\{a, b\}\subseteq R_r$, then $\chi(a, b)=r$;
\item if $a\in R_l, b\in R_r$ and $P_1(a, b)$, then $\chi(a, b)=P_1$;
\item if $a\in R_l, b\in R_r$ and $P_2(a, b)$, then $\chi(a, b)=P_2$;
\item if $a\in R_l, b\in R_r$ and $P_3(a, b)$, then $\chi(a, b)=P_3$.
\end{itemize}

\begin{lem}\label{lemmaC}
Let $\mathfrak{A}$ be the class of bipartite subgraphs having at least one vertex in $R_i$ ($i=l, r$) in $\Gamma$ with edge
coloring $\chi: [V]^2\longrightarrow \{l, r, P_1, P_2, P_3\}$ defined above, and with vertex coloring $\Phi: V\longrightarrow\{L, \overline{P_l}, \overline{P_2}, \overline{P_3}\}$ defined above. For any finite $A_1\in \mathfrak{A}$, there is a finite $A_2\in
\mathfrak{A}$ such that for any vertex coloring $\Psi:
A_2\longrightarrow \{L, \overline{P_1}, \overline{P_2}, \overline{P_3}\}$, which  is not a permutation of $\Phi$, then there exists
$A_1'\in\mathfrak{A}$ with $\langle A_1, \Phi, \chi\rangle\cong \langle A_1', \Phi, \chi\rangle$, $A_1'\subset A_2$, and one
of the following properties:
\renewcommand{\labelenumi}{(\alph{enumi})}
\begin{enumerate}
\item There is a color $t\in\{\overline{P_1}, \overline{P_2}, \overline{P_3}\}$ such that $\Psi(x)\neq
t$ for every vertex $x\in A_1'\subset A_2$.
\item There is some vertex coloring $\Psi': A_1'\longrightarrow \{L, \overline{P_1}, \overline{P_2},
\overline{P_3}\}$, which is a permutation of $\Psi$, and differs from $\Phi:
A_1'\longrightarrow\{L, \overline{P_1}, \overline{P_2}, \overline{P_3}\}$ on exactly one vertex in $R_r$.
\end{enumerate}
\end{lem}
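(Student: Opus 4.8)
The plan is to read Lemma~\ref{lemmaC} as a Ramsey statement about the class $\mathfrak{A}$ of finite cross‑$3$‑coloured bipartite graphs carrying the extra right‑vertex shading $\Phi$, and to extract $A_2$ from the Ne\v{s}et\v{r}il--R\"{o}dl theorem exactly as in the proofs of Theorem~\ref{existstence} and Lemma~\ref{Slrgroups} (with an auxiliary linear order built in, the ``$\alpha$-pattern'' device used there). I would take for $A_2$ a Ne\v{s}et\v{r}il--R\"{o}dl object for $A_1$ relative to colourings of single vertices by $\{L,\overline{P_1},\overline{P_2},\overline{P_3}\}$, but enlarged — if necessary by iterating the Ramsey property, or by sitting $A_2$ inside a sufficiently generic finite approximation $\Gamma_N$ of $\Gamma$ — so that in addition every vertex of $A_2$ lies in a $\Phi$-preserving copy of $A_1$ realising any prescribed cross-type pattern to the rest of that copy. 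The first role of $A_2$ is the Ramsey one: for any vertex colouring $\Psi$ of $A_2$ there is a copy $A_1'\cong\langle A_1,\Phi,\chi\rangle$ on which $\Psi$ is constant on each $\Phi$-class, say with value $c_L$ on all left vertices and value $\tau(\overline{P_i})\in\{L,\overline{P_1},\overline{P_2},\overline{P_3}\}$ on all right vertices of shade $\overline{P_i}$.

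Fix such a copy $A_1'$ for the given $\Psi$, and split on whether $\Psi\upharpoonright A_1'$ is a permutation of $\Phi\upharpoonright A_1'$. If it is not, then $\Phi$ is homogeneous with respect to $\Psi$ on $A_1'$ but not a permutation of it, so Claim~\ref{vhomogeneous} supplies distinct shades $\overline{P_i},\overline{P_j}$ collapsed by $\tau$ to a single shade $\overline{P_k}$; hence $\tau$ omits some shade on the right vertices of $A_1'$, and after taking that omitted shade to be different from $c_L$ we land in case~(a). If instead $\Psi\upharpoonright A_1'$ is a permutation of $\Phi\upharpoonright A_1'$ via some $\sigma\in S_3$, put $\Psi^{\ast}=\sigma^{-1}\circ\Psi$ on $A_2$, so that $\Psi^{\ast}$ agrees with $\Phi$ on every right vertex of $A_1'$. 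Since $\Psi$, and therefore $\Psi^{\ast}$, is not a permutation of $\Phi$ on $A_2$, there is a right vertex $w^{\ast}$ of $A_2$ with $\Psi^{\ast}(w^{\ast})\neq\Phi(w^{\ast})$; write $\Phi(w^{\ast})=\overline{P_m}$.

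Now use the extra richness of $A_2$ to graft $w^{\ast}$ into a copy of $A_1$: replace the shade-$\overline{P_m}$ right vertex of $A_1'$ whose cross-type pattern to $A_1'\cap R_l$ matches that of $w^{\ast}$ by $w^{\ast}$ itself, obtaining a copy $A_1''\cong\langle A_1,\Phi,\chi\rangle$ with $A_1''\subset A_2$ on which $\Psi^{\ast}$ coincides with $\Phi$ off the single right vertex $w^{\ast}$. Then $\Psi':=\Psi^{\ast}\upharpoonright A_1''=\sigma^{-1}\circ(\Psi\upharpoonright A_1'')$ is a permutation of $\Psi\upharpoonright A_1''$ (witnessed by the identity map of $A_1''$ together with $\sigma$) that differs from $\Phi$ on exactly the vertex $w^{\ast}\in R_r$, which is case~(b). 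That such a copy $A_1''$ exists — rather than $w^{\ast}$ being an isolated obstruction with no matching slot in any copy of $A_1$ — is exactly what the enlargement of $A_2$ was for.

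The main obstacle is this grafting step: it forces $A_2$ to be chosen strictly richer than a bare Ramsey object for $A_1$, so that a $\Phi$-preserving, otherwise $\Psi$-homogeneous copy of $A_1$ can be threaded through a prescribed right vertex while its cross-types to the rest of the copy still close up; verifying that the finite approximations to $\Gamma$ retain enough of its extension/genericity to supply these matchings is the real work. A secondary point that genuinely needs care, though no new idea, is the bookkeeping of the left-vertex colour $c_L$ in the split between case~(a) and the permutation case: when $\tau$ collapses two shades but the one shade it omits on the right happens to equal $c_L$, case~(a) is not immediate, and one must either re‑select $A_1'$ (via a further application of the Ramsey property to the shade‑class of the third right colour) so that an omitted shade avoids $c_L$, or, as in the ``sufficiently complex'' arguments of the earlier sections, use the latitude in $A_1$ to ensure that no omitted right shade can be hidden among the left vertices.
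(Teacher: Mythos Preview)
Your overall plan—Ne\v{s}et\v{r}il--R\"{o}dl for vertex partitions, followed by the dichotomy via Claim~\ref{vhomogeneous}—matches the paper's. But the ``grafting step'' you flag as the main obstacle is a real gap in your version, and the paper resolves it by a move you do not quite make: it does \emph{not} apply Ramsey with $A_1$ as the pattern. Instead it first chooses (via SFSP) a finite approximation $\Gamma_N\supseteq A_1$ large enough that \emph{every} vertex of $\Gamma_N$ already lies in a $(\Phi,\chi)$-copy of $A_1$, extends $\Phi,\chi$ to all of $\Gamma_N$, and only then applies Ne\v{s}et\v{r}il--R\"{o}dl with $(\Gamma_N,\Phi,\chi)$ as the pattern. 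The Ramsey object $A_2$ therefore carries the extended $\Phi$, and the homogeneous subobject one obtains is a copy $\Gamma_N'$ of the \emph{large} structure $\Gamma_N$, not merely of $A_1$. In the permutation case one then walks outward from $\Gamma_N'$ through $A_2$ one vertex at a time; at the first vertex $v$ where the permutation relation fails, the ``every vertex lies in a copy of $A_1$'' property—built into $\Gamma_N$ and hence inherited by the chain—supplies a copy $B\ni v$ with $B\setminus\{v\}$ inside the region where $\Psi$ is still a permutation of $\Phi$, and this yields~(b) directly. No matching of the cross-types of an external $w^\ast$ against a fixed small $A_1'$ is required. Your suggestion of ``sitting $A_2$ inside a sufficiently generic $\Gamma_N$'' after the fact is in the wrong direction: the point is to enlarge the \emph{pattern} before invoking Ramsey, so that homogeneity is guaranteed on a set large enough to absorb the single bad vertex into a copy of $A_1$.

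Your secondary worry about the left-vertex colour $c_L$ disappears once you read the hypotheses as the paper does: $\Psi$, like $\Phi$, takes the value $L$ on all of $R_l$. The paper's proof uses this explicitly (``$v\in R_r$ since $\Psi(x)=\Phi(x)$ for every $x\in R_l$''), and it is the only case that arises in the application of the lemma in Theorem~\ref{final}, where $\Psi=\Phi_{g(\alpha)}$ is induced by a vertex $g(\alpha)\in R_l$.
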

\begin{proof}

We choose $M\in\mathbb{N}$ such that for any $i\geq M$, and for any
$v\in\Gamma_i$, there exists $B\subset \Gamma_i$ with $v\in B$ such
that $B\cong A_1$. We can do this because of the extension property
and SFSP. Now we choose $N\in \mathbb{N}$ such that $N\geq M$ and $\Gamma_N\supseteq A_1$. Since for any
$v\in\Gamma_N$, there exists an isomorphic copy of $A_1$ containing $v$, we can extend the colorings $\Phi, \chi$ on $A_1$ to the colorings on the whole $\Gamma_N$. We call them $\Phi, \chi$ to simplify the notation. Now let $\langle \Gamma_N, \Phi, \chi \rangle$ be our $P-$pattern.
Then by the  Ne\v{s}et\v{r}il-R\"{o}dl Theorem, there exists a finite $A_2\in
\mathfrak{A}$ such that for any partition function $F=\Psi: A_2\longrightarrow \{L, \overline{P_1}, \overline{P_2}, \overline{P_3}\}$,
there is $\Gamma_N'\subset A_2$ satisfying
\begin{itemize}
\item $\Gamma_N'$ has the $\alpha$-pattern $P$ (hence $\Gamma_N' \cong \Gamma_N$);
\item $(\Gamma_N', \Phi)$ is $\Psi$-homogeneous.
\end{itemize}
Since $\Gamma_N'\cong \Gamma_N$,  we can find a graph $A_1'\subset
\Gamma_N'$ such that $\langle A_1', \Phi, \chi\rangle\cong \langle A_1, \Phi, \chi\rangle$. Now we have $\langle\Gamma_N',
\Phi\rangle$ is $\Psi-$homogeneous, i.e. if $\Phi(a)=\Phi(b)$, then
$\Psi(a)=\Psi(b)$ for $a, b\in \Gamma_N'$.

If $\Psi$ is not a permutation of $\Phi$ on $\Gamma_N'$, then by Claim \ref{vhomogeneous} there
must exist some color $t\in \{\overline{P_1}, \overline{P_2}, \overline{P_3}\}$ such that $\Psi(v)\neq t$ for
every $v\in \Gamma_N'$. Since $A_1'\subset\Gamma_N'$, the same
result holds for $A_1'$. Then Property $(1)$ holds.

If $\Psi$ is a permutation of $\Phi$ on $\Gamma_N'$, then since $\Psi$ is not a permutation of $\Phi$ on $A_2\supset \Gamma_N'$, there must
exist some $\Gamma_k$ for $k\geq N$ such that $\Psi$ is a permutation
of $\Phi$ on $\Gamma_k$, but $\Psi$ is not a permutation of $\Phi$
on $\Gamma_{k+1}$. Let $\Gamma_{k+1}=\Gamma_k\cup \{v\}$. Note that $v\in R_r$ since $\Psi(x)=\Phi(x)$ for every $x\in R_l$. There must
exist a subgraph $B\subset \Gamma_{k+1}$ such that $v\in B$ and $f: B\cong A_1$
preserves the vertex coloring $\Phi$ and the edge coloring $\chi$. Hence we have $\Psi\upharpoonright (B\backslash \{v\})$ is a permutation
of $\Phi\upharpoonright (B\backslash \{v\})$, but
$\Psi\upharpoonright B$ is not a permutation of
$\Phi\upharpoonright B$. Let $f_1$ be an isomorphism from $A_1$ to $A_1'$, and
let $\Psi'=\Psi\circ f^{-1}\circ f_1^{-1}$. Then we have the Property $(2)$. This completes the proof of Lemma \ref{lemmaC}.
\end{proof}

Similarly, given a bipartite graph $A\subseteq\Gamma$, we define a class of vertex colorings $\overline{\Phi}: A\longrightarrow \{R, \overline{P_1}, \overline{P_2}, \overline{P_3}\}$ by for every $x\in A$,
\begin{itemize}
\item if $x\in R_r$, then $\Phi(x)=R$;
\item if $x\in R_l$, then $\Phi(x)=\overline{P_i}$ for some $i=1, 2, 3$.
\end{itemize}

A similar argument gives the following Lemma:
\begin{lem}\label{lemmaD}
Let $\mathfrak{A}$ be the class of bipartite subgraphs of $\Gamma$ having at least one vertex in $R_i$ ($i=l, r$) with edge
coloring $\chi: [V]^2\longrightarrow \{l, r, P_1, P_2, P_3\}$ defined above and with vertex coloring $\overline{\Phi}: V\longrightarrow\{R, \overline{P_l}, \overline{P_2}, \overline{P_3}\}$ defined above. For any finite $A_1\in \mathfrak{A}$, there is a finite $A_2\in
\mathfrak{A}$ such that for any vertex coloring $\Psi:
A_2\longrightarrow \{R, \overline{P_1}, \overline{P_2}, \overline{P_3}\}$, and which is not a permutation of $\overline{\Phi}$, there exists
$A_1'\in\mathfrak{A}$ with $\langle A_1, \overline{\Phi}, \chi\rangle\cong \langle A_1', \overline{\Phi}, \chi\rangle$, $A_1'\subset A_2$, and one
of the following properties:
\renewcommand{\labelenumi}{(\alph{enumi})}
\begin{enumerate}
\item There is a color $t\in\{\overline{P_1}, \overline{P_2}, \overline{P_3}\}$ such that $\Psi(x)\neq
t$ for every vertex $x\in A_1'\subset A_2$.
\item There is some vertex coloring $\Psi': A_1'\longrightarrow \{R, \overline{P_1}, \overline{P_2},
\overline{P_3}\}$, which is a permutation of $\Psi$, and differs from $\overline{\Phi}:
A_1'\longrightarrow\{R, \overline{P_1}, \overline{P_2}, \overline{P_3}\}$ on exactly one vertex in $R_l$.
\end{enumerate}
\end{lem}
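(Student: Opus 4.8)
\textbf{Proof proposal for Lemma \ref{lemmaD}.}

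The plan is to mirror the proof of Lemma \ref{lemmaC} line by line, swapping the roles of $R_l$ and $R_r$ throughout. First I would choose $M\in\mathbb{N}$ so that for all $i\geq M$ every vertex $v\in\Gamma_i$ lies in some $B\subset\Gamma_i$ with $B\cong A_1$; this is available from the extension property of $\Gamma$ and the SFSP (Theorem \ref{sfsp}). Then I pick $N\geq M$ with $\Gamma_N\supseteq A_1$, and use the fact that every vertex of $\Gamma_N$ is contained in an isomorphic copy of $A_1$ to extend the colorings $\overline{\Phi}$ and $\chi$ from $A_1$ to all of $\Gamma_N$, keeping the same names. I then treat $\langle\Gamma_N,\overline{\Phi},\chi\rangle$ as the $P$-pattern and invoke the Ne\v{s}et\v{r}il--R\"{o}dl Theorem with the target color set $\{R,\overline{P_1},\overline{P_2},\overline{P_3}\}$ to obtain a finite $A_2\in\mathfrak{A}$ with the property that for every partition $F=\Psi\colon A_2\longrightarrow\{R,\overline{P_1},\overline{P_2},\overline{P_3}\}$ there is a copy $\Gamma_N'\subset A_2$ of the $P$-pattern that is $\Psi$-homogeneous. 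Inside $\Gamma_N'$ we then find $A_1'$ with $\langle A_1',\overline{\Phi},\chi\rangle\cong\langle A_1,\overline{\Phi},\chi\rangle$.

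The case split is the same as before. If $\Psi$ fails to be a permutation of $\overline{\Phi}$ already on $\Gamma_N'$, then by Claim \ref{vhomogeneous} (applied with the colors $\overline{P_i}$) some color $t\in\{\overline{P_1},\overline{P_2},\overline{P_3}\}$ is omitted by $\Psi$ on all of $\Gamma_N'$, hence on $A_1'\subset\Gamma_N'$; this gives Property $(a)$. Otherwise $\Psi$ is a permutation of $\overline{\Phi}$ on $\Gamma_N'$ but, by hypothesis, not on $A_2\supset\Gamma_N'$, so there is a least $k\geq N$ with $\Psi$ a permutation of $\overline{\Phi}$ on $\Gamma_k$ but not on $\Gamma_{k+1}=\Gamma_k\cup\{v\}$. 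Here the crucial point, dual to the one in Lemma \ref{lemmaC}, is that $v\in R_l$: since $\overline{\Phi}(x)=R$ for every $x\in R_r$ and $\Psi$ restricted to $R_r$ must still send those vertices to $R$ (a permutation of $\overline{\Phi}$ on $\Gamma_k$ already fixes the $R$-colored vertices, and adding one more vertex cannot change that unless the new vertex is itself in $R_l$), the newly added vertex responsible for the failure must lie in $R_l$. Pick a subgraph $B\subset\Gamma_{k+1}$ with $v\in B$ and an isomorphism $f\colon B\cong A_1$ preserving $\overline{\Phi}$ and $\chi$; then $\Psi\upharpoonright(B\setminus\{v\})$ is a permutation of $\overline{\Phi}\upharpoonright(B\setminus\{v\})$ while $\Psi\upharpoonright B$ is not. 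Taking an isomorphism $f_1\colon A_1\to A_1'$ and setting $\Psi'=\Psi\circ f^{-1}\circ f_1^{-1}$ yields a coloring on $A_1'$ that is a permutation of $\Psi$ and differs from $\overline{\Phi}$ on exactly the single vertex $f_1(v)\in R_l$, which is Property $(b)$.

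The only step requiring genuine care, rather than mechanical translation, is verifying that the failure vertex $v$ lies in $R_l$: in Lemma \ref{lemmaC} the coloring $\Phi$ pins down every vertex of $R_l$ and the argument exploited that $\Psi$ agrees with $\Phi$ on $R_l$; here the symmetric statement is that $\overline{\Phi}$ pins down $R_r$, so one must argue that a partition coloring which is a permutation of $\overline{\Phi}$ on $\Gamma_k$ cannot break permutation-ness by the addition of an $R_r$-vertex, because all $R_r$-vertices carry the distinguished color $R$ and any $\overline{\Phi}$-permutation fixes that color class setwise. I expect this to be the main (and essentially the only) obstacle; everything else is a direct transcription of the proof of Lemma \ref{lemmaC} with $l$ and $r$ interchanged, and I would simply remark that the argument is ``similar'' once this point is isolated.
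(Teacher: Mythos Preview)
Your proposal is correct and is exactly the approach the paper takes: the paper does not give a separate proof of Lemma~\ref{lemmaD} but simply states that ``a similar argument'' to Lemma~\ref{lemmaC} yields it, which amounts to the $l\leftrightarrow r$ transcription you describe. Your isolation of the one non-mechanical point---that the failure vertex $v$ must lie in $R_l$ because $\overline{\Phi}$ pins down $R_r$---is the correct dual of the corresponding step in Lemma~\ref{lemmaC}.
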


\begin{thm}\label{final}
If $G$ is an irreducible closed subgroup such that $Aut(\Gamma)\leq G\leq
Sym_{\{l, r\}}(\Gamma)$, then $G=\langle S_{\{l\}}^{H_1}(\Gamma),
S_{\{r\}}^{H_2}(\Gamma)\rangle$ where $H_1, H_2\leq S_3$. If  $G\subset
Sym_{\{l, r\}}(\Gamma)$, then $H_1=H_2$ unless one of the two groups is trivial.
\end{thm}

\begin{proof}
We show that there is a sufficiently complex $(R, \alpha)$ with
$R\subset \Gamma$ and $\alpha\in \Gamma\backslash R$, such that for any $g\in
G$, if $g\upharpoonright R$ is an isomorphism, then
$g\upharpoonright {R\cup\{\alpha\}}$ is a switch w.r.t. $\alpha$ according to some $\sigma\in S_3$. Then by Lemma \ref{Slrgroups} we are done.

Suppose that for any sufficiently complex $(R, w)$, there
exists some $g\in G$ such that $g\upharpoonright R$ is an
isomorphism and $g\upharpoonright R\cup\{w\}$ is not a switch w.r.t. $w$ according to any $\sigma\in S_3$. We
eventually get a contradiction. WLOG, let $w\in R_l$. Choose $N$ such that $i\geq N$ implies $\Gamma_i$ is sufficiently
complex and satisfies the conclusion of Lemma \ref{lemmaA} and
Lemma \ref{lemmaB}. Since $\Gamma_N$ is sufficiently complex, then there exists some $v\in \Gamma \backslash \Gamma_N$ such
that $(\Gamma_N, v)$ is sufficiently complex.  Let $v\in R_l$, and let $R=\Gamma_N\cup \{v\}$. We define the coloring $\chi: [R\backslash \{v\}]^2\longrightarrow \{l, r, P_1, P_2, P_3\}$ as above. The edges between $\{v\}$ and $R\backslash \{v\}$ also induce a vertex
coloring $\Phi_{v}: R\backslash\{v\}\longrightarrow \{L, \overline{P_1}, \overline{P_2}, \overline{P_3}\}$ given for every $a\in R\backslash\{v\}$ by
\begin{itemize}
\item if $a\in R_l$, then $\Phi_v(a)=L$;
\item if $a\in R_r$ and $P_1(v, a)$, then $\Phi_v(a)=\overline{P_1}$;
\item if $a\in R_r$ and $P_2(v, a)$, then $\Phi_v(a)=\overline{P_2}$;
\item if $a\in R_r$ and $P_3(v, a)$, then $\Phi_v(a)=\overline{P_3}$.
\end{itemize}
That is, the color of the vertex $a$ is given by the cross-type of the cross-edge $(a, v)$.

By Lemma \ref{lemmaC}, given $(R\backslash \{v\}, \Phi_v, \chi)$, there are a subgraph $S\subset \Gamma$ with a vertex coloring
$\Phi_{\alpha}: S\longrightarrow\{L, \overline{P_1}, \overline{P_2}, \overline{P_3}\}$ and an edge coloring $\chi: [S]^2\longrightarrow \{l, r, P_1, P_2, P_3\}$ such that for any other vertex coloring $\Psi: S\longrightarrow \{L, \overline{P_1}, \overline{P_2}, \overline{P_3}\}$, if $\Psi$ is not a permutation of $\Psi_{\alpha}$, there exists $R'\subset S$ such that $(R\backslash \{v\}, \Phi_v, \chi)\cong (R', \Phi_{\alpha}, \chi)$, hence $R'\cong R\backslash \{v\}$, and one of the properties $(a)$ and $(b)$ in Lemma \ref{lemmaC} holds. Since $\Gamma$ is random, there must exists a vertex $\alpha\in (\Gamma\backslash S)\cap R_l$ inducing the vertex coloring $\Phi_{\alpha}$.

Note that $\langle R', \Phi_{\alpha}, \chi\rangle\cong \langle R\backslash \{v\}, \Phi_{v}, \chi\rangle$, hence $(R\backslash\{v\})\cup\{v\}\cong R'\cup\{\alpha\}$. Since $(R\backslash\{v\}, v)$ is sufficiently complex, so is $(R', \alpha)$. Then $S\cup \{\alpha\}$ is sufficiently complex since $S\supset R'$. By the assumption at the beginning of the proof, there exists some $g\in G$
which is an isomorphism on $S$, but $g\upharpoonright S\cup\{\alpha\}$ is not a switch w.r.t. $\alpha$ according to any $\sigma\in S_3$. Then the vertex $g(\alpha)$ induces a new coloring
$\Phi_{g(\alpha)}$ on $g(S)$, and $\Phi_{g(\alpha)}$ is not a
permutation of $\Phi_{\alpha}$ since $g\upharpoonright
S\cup\{\alpha\}$ is not a switch w.r.t. $\alpha$ according to any $\sigma\in S_3$. Since $g(S)\cong S$, $g(\alpha)$ also induces a new coloring $\Phi_{g(\alpha)}$ on $S$. Let $\Psi=\Phi_{g(\alpha)}$. Then by Lemma \ref{lemmaC} we can find $R'\subset S$ such that one of the following two
possibilities holds:
\renewcommand{\labelenumi}{(\alph{enumi})}
\begin{enumerate}
\item there is a color $t\in \{\overline{P_1}, \overline{P_2}, \overline{P_3}\}$ such that $\Phi_{g(\alpha)}(x)\neq t$ for
any vertex $x\in R'$.
\item There is a coloring $R'\longrightarrow\{L, \overline{P_1}, \overline{P_2}, \overline{P_3}\}$ which
is a permutation of $\Phi_{g(\alpha)}$, but differs from
$\Phi_{\alpha}$ by exactly one vertex in $R_r$.
\end{enumerate}
Now $R\cong R'\cup\{\alpha\}$ since $\langle R', \Phi_{\alpha}\rangle\cong \langle R\backslash \{v\}, \Phi_{v}\rangle$. If
$(a)$ holds, then there is some $P_i$ ($i=1, 2, 3$) such that there is no edge between $g(\alpha)$ and $g(R')$
with the cross-type $P_i$, contrary to Lemma \ref{lemmaA}. If
$(b)$ holds, then $g\upharpoonright R'\cup\{\alpha\}$ differs from a
switch on exactly one cross-edge, contradicting Lemma
\ref{lemmaB}. Similarly, if we assume $w\in R_r$, we eventually get the contradiction by applying Lemma \ref{lemmaD}. This completes the proof of Theorem \ref{final}.
\end{proof}

%

\section{The Case When $R_l$ and $R_r$ Are Not Preserved}
Next we do not assume that $G$ preserves $R_l$ and $R_r$. Since an element of $G$ either preserves $R_l$ and $R_r$ or switches $R_l$ and
$R_r$, for every reduct $S_X^H{(\Gamma)}$ where $X\subseteq\{l, r\}$, and $H\leq S_3$
(which preserves $R_l, R_r$), we always have a corresponding reduct
${S_X^H}^*{(\Gamma)}$ which preserves the same relation as
$S_X^H{(\Gamma)}$ except it either preserves $R_l$ and $R_r$ or switches
them.

Since the elements in $G$ either preserve $R_l$ and $R_r$ or switch $R_l$ and
$R_r$, we can introduce a relation of on the same side $S(x, y)$ to replace $R_l, R_r$ defined by $S(x, y)\longleftrightarrow (R_l(x)\wedge R_l(y))\vee(R_r(x)\wedge R_r(y))$, and a new binary relation $P_i^*$ to replace $P_i$ defined by $P_i^*(x, y)\longleftrightarrow P_i(x, y) \vee P_i(y, x)$ for $i=1, 2, 3$. So if one forgets $R_l, R_r$ but retains $S(x, y)$ and replaces $P_i$ by $P_i^*$, then either the sides are preserved or switched. Since $\Gamma$ satisfies the extension property $\Theta_n$ for $n\in \mathbb{N}$, we can construct $\rho\in Sym (\Gamma)$ which exchanges the sets $R_r$ and $R_l$ and such that for every $a\in R_l$ and every $b\in R_r$, $P_i(a, b)\longrightarrow P_i(\rho(b), \rho(a))$ where $i=1, 2, 3$. Let $\overline{S_X^H}{(\Gamma)}$ be $\overline{\langle S_X^H{(\Gamma)}, \rho\rangle}$. Note that $[\overline{S_X^H}{(\Gamma)}: S_X^H{(\Gamma)}]=2$, and $S_X^H{(\Gamma)}\triangleleft \overline{S_X^H}{(\Gamma)}$.

\begin{thm}
If $G$ is an irreducible closed subgroup such that $Aut(\Gamma)\leq G\leq
Sym_{\{l, r\}}(\Gamma)$, then $G=\langle S_{\{l\}}^{H_1}(\Gamma),
S_{\{r\}}^{H_2}(\Gamma)\rangle$ or $G=\langle \overline{S_X^{H_1}}(\Gamma),
\overline{S_X^{H_2}}(\Gamma)\rangle$ where $H_1, H_2\leq S_3$. If $G< Sym_{\{l, r\}}(\Gamma)$, then $H_1=H_2$
unless one of the two groups is trivial.
\end{thm}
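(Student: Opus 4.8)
The plan is to reduce to Theorem~\ref{final} by passing to the side-preserving part of $G$, and then to handle the single extra coset by hand. If $G$ already preserves $R_l$ and $R_r$, it is an irreducible closed subgroup of $Sym_{\{l,r\}}(\Gamma)$ containing $Aut(\Gamma)$, and Theorem~\ref{final} delivers the first alternative. Otherwise $G$ contains a permutation swapping the two sides, and since every element of $G$ either fixes or swaps $R_l,R_r$, the subgroup $G_0:=G\cap Sym_{\{l,r\}}(\Gamma)$ is closed, normal of index $2$ in $G$, and contains $Aut(\Gamma)$. It is automatically irreducible: any permutation fixing a single $P_k$ must fix $R_l$ and $R_r$ (every vertex of $R_l$ is a left endpoint of some $P_k$-edge by the extension property), so the set of such permutations already lies inside $Sym_{\{l,r\}}(\Gamma)$, and if $G_0$ contained it then so would $G$, contradicting the irreducibility of $G$. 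Hence Theorem~\ref{final} applies to $G_0$ and produces $H_1,H_2\leq S_3$ with $G_0=\langle S_{\{l\}}^{H_1}(\Gamma),S_{\{r\}}^{H_2}(\Gamma)\rangle$, and with $H_1=H_2$ unless one of them is trivial whenever $G_0\subset Sym_{\{l,r\}}(\Gamma)$.

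The heart of the argument is to show that the canonical flip $\rho$ lies in $G$. This is the step that is not formal and that reuses the combinatorics of the earlier sections. Running the Ne\v{s}et\v{r}il-R\"{o}dl theorem together with the SFSP on the symmetrised structure (with relations $S(x,y)$ and $P_1^{*},P_2^{*},P_3^{*}$) shows that the restriction of any side-swapping $g\in G$ to a sufficiently complex $\Gamma_i$ agrees with a composition of switches on single vertices followed by a twisted flip $\rho_\sigma$, namely the map with $P_i(a,b)\to P_{\sigma(i)}(\rho_\sigma(b),\rho_\sigma(a))$ for a fixed $\sigma\in S_3$, which is $\rho$ composed with a global switch by $\sigma$ on $R_l$. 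The usual bookkeeping for sufficiently complex sets -- they witness all cross-types and witness which switches belong to $G$ -- then lets one correct $g$ by elements of $G_0$ so that $\rho$ itself lies in the closure. With $\rho\in G$ we have $G=\langle G_0,\rho\rangle$.

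Finally one assembles the classification. Conjugation by $\rho$ sends a switch on $v\in R_l$ according to $\sigma$ to a switch on $\rho(v)\in R_r$ according to $\sigma$ and fixes $Aut(\Gamma)$, so $\rho S_{\{l\}}^{H}(\Gamma)\rho^{-1}=S_{\{r\}}^{H}(\Gamma)$ and $\overline{S_{\{l\}}^{H}}(\Gamma)=\overline{S_{\{r\}}^{H}}(\Gamma)=\overline{\langle S_{\{l,r\}}^{H}(\Gamma),\rho\rangle}$; therefore $G=\overline{\langle S_{\{l\}}^{H_1}(\Gamma),S_{\{r\}}^{H_2}(\Gamma),\rho\rangle}=\langle\overline{S_{\{l\}}^{H_1}}(\Gamma),\overline{S_{\{r\}}^{H_2}}(\Gamma)\rangle$. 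For the constraint on $H_1,H_2$ when $G\subset Sym(\Gamma)$: since $\rho\in G$ and $G_0$ is normal in $G$, we get $S_{\{r\}}^{H_1}(\Gamma)=\rho S_{\{l\}}^{H_1}(\Gamma)\rho^{-1}\leq G_0$ and $S_{\{l\}}^{H_2}(\Gamma)=\rho S_{\{r\}}^{H_2}(\Gamma)\rho^{-1}\leq G_0$, so $G_0$ contains $S_{\{l\}}^{H_1},S_{\{l\}}^{H_2},S_{\{r\}}^{H_1},S_{\{r\}}^{H_2}$; if $H_1\neq H_2$ were both non-trivial then Lemma~\ref{H12} would force $G_0=Sym_{\{l,r\}}(\Gamma)$ and hence $G=Sym(\Gamma)$, while otherwise Lemma~\ref{redu} already yields $H_1=H_2$ unless one is trivial. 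The main obstacle is the middle paragraph: proving $\rho\in G$ really does require redeploying the Ramsey/SFSP machinery in the side-swapping setting and is not a consequence of Theorem~\ref{final} alone; everything after it is bookkeeping with the conjugation action of $\rho$ together with Lemmas~\ref{H12} and~\ref{redu}.
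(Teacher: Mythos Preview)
The paper does not actually give a proof of this theorem: Section~6 merely introduces the symmetrised relations $S$ and $P_i^{*}$, constructs the flip $\rho$, defines $\overline{S_X^H}(\Gamma)=\overline{\langle S_X^H(\Gamma),\rho\rangle}$, and then states the result without argument. So there is nothing to compare your proof against beyond that setup. Your reduction to $G_0=G\cap Sym_{\{l,r\}}(\Gamma)$, the verification that $G_0$ is irreducible, and the appeal to Theorem~\ref{final} are all correct and in line with what the paper clearly intends.

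The substantive gap is the middle paragraph: you assert that the canonical flip $\rho$ lies in $G$, but this need not hold. Take $G=\overline{\langle Aut(\Gamma),\rho_{(12)}\rangle}$, where $\rho_{(12)}$ is the twisted flip with $P_i(a,b)\Rightarrow P_{(12)(i)}(\rho_{(12)}(b),\rho_{(12)}(a))$. A direct computation shows $\rho_{(12)}$ normalises $Aut(\Gamma)$ and $\rho_{(12)}^{\,2}\in Aut(\Gamma)$, so $G_0=Aut(\Gamma)$, i.e.\ $H_1=H_2=\{1\}$, and $G$ is irreducible. Every side-swapping element of $G$ then lies in $Aut(\Gamma)\cdot\rho_{(12)}$ and hence permutes cross-types by $(12)$, whereas $\rho$ preserves them; so $\rho\notin G$. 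Your ``correct $g$ by elements of $G_0$'' step breaks exactly here: untwisting $\rho_\sigma$ to $\rho$ requires a global switch by $\sigma$ on one side, and such a switch lies in $G_0$ only when $\sigma\in H_1$. The Ramsey/SFSP argument you sketch will at best produce some $\rho_\sigma\in G$, not the canonical $\rho$. The same example shows that the theorem as printed is at least imprecise: above each side-preserving reduct there are in general several index-two extensions, distinguished by the twist $\sigma$, and the single family $\overline{S_X^H}=\overline{\langle S_X^H,\rho\rangle}$ does not exhaust them.
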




\bibliography{reducts_graph}
\bibliographystyle{jflnat}

\end{document}